\definecolor{darkblue}{rgb}{0.0,0.0,0.3}
\numberwithin{equation}{section}
\newtheorem{defin}{Definition}[section]
\newtheorem{rem}{Remark}[section]
\newtheorem{eg}{Example}[section]
\newtheorem{theorem}{Theorem}[section]
\newtheorem{lemma}{Lemma}[section]
\newcommand{\tr}{\mathrm{Tr\,}}
\newcommand{\dd}{\mathrm{d}}
\newcommand{\ketbra}[1]{\ensuremath{\left|#1\right\rangle
\hspace{-3pt}\left\langle #1\right|}}
\renewcommand{\epsilon}{\varepsilon}
\begin{document}
\title{Infinite dimensional dynamical maps}
\author{Bihalan Bhattacharya}
\address{Institute of Physics, Faculty of Physics, Astronomy and Informatics, Nicolaus Copernicus University, Grudziadzka 5/7, 87-100 Torun, Poland}
\email[Bihalan Bhattacharya]{\href{mailto:bihalan@gmail.com }{bihalan@gmail.com }}
\author{Uwe Franz}
\address{Laboratoire de math{\'e}matiques de Besan{\c c}on, Universit{\'e} de Franche-Comt{\'e}, 16, Route de Gray, F-25 030 Besan{\c c}on cedex, France}
\email[Uwe Franz]{\href{mailto:uwe.franz@univ-fcomte.fr }{uwe.franz@univ-fcomte.fr }}
\author{Saikat Patra}
\address{Department of Mathematical Sciences, Indian Institute of
Science Education \& Research (IISER) Berhampur, Transit campus,
Berhampur, 760 010, Ganjam, Odisha, India.}
\email[Saikat Patra]{\href{mailto:saikatp@iiserbpr.ac.in}{saikatp@iiserbpr.ac.in}}
\author{Ritabrata Sengupta}
\address{Department of Mathematical Sciences, Indian Institute of
Science Education \& Research (IISER) Berhampur, Transit campus,
Berhampur, 760 010, Ganjam, Odisha, India.}
\email[Ritabrata Sengupta]{\href{mailto:rb@iiserbpr.ac.in}{rb@iiserbpr.ac.in}}
\begin{abstract}
 
\par Completely positive trace preserving maps are widely used in quantum information theory. These are mostly studied using the master equation perspective. A central part in this theory is to study whether a given system of dynamical maps $\{\Lambda_t: t \ge 0\}$ is Markovian or non-Markovian. We study the problem when the underlying Hilbert space is of infinite dimensional. We construct a sufficient condition for checking P (resp. CP) divisibility of dynamical maps. We construct several examples where the underlying Hilbert space may not be of finite dimensional. We also give a special emphasis on Gaussian dynamical maps and get a version of our result in it.    \\ 
\smallskip
\noindent \textbf{Keywords.} Markovian channel, Gaussian channel, divisibility, dynamical map.

\smallskip

\noindent \textbf{Mathematics Subject Classification
(2020):} 81P40, 81P47, 94A15.
\end{abstract}
\maketitle


\section{Introduction}
 \par  The study of dynamical stochastic maps is one of the central concepts in probability theory. The non-commutative analogue, written in the language of quantum probability is also important with the increasing advancement of quantum information theory. Quantum information theory is the study of the achievable limits of information processing within quantum mechanics. The theory of open quantum systems is the backbone of nearly all modern research in quantum statistical mechanics and its applications.  The description of quantum systems interacting with their environment is the central objective of the theory of open quantum systems. 
 
\par  The theory of the open quantum system plays an important role in many applications of quantum physics. Quantum theory introduces a deterministic law, the Schr\"odinger equation, which governs the evolution of the quantum states which are non-commutative analogue of probability distribution. The actual dynamics of any open quantum systems are expected to deviate to some extent from the idealised Markovian evolution, which appears from the condition of weak coupling to a memoryless reservoir or an isolated system. The reason for this deviation has been physically explained as the back-flow of information from the environment to the system. This point of view was studied in Verstraete, et al. \cite{FC-1}, which further suggests that a properly engineered non-Markovian evolution can improve the efficiency of quantum technology protocols. As a result such non-Markovian evolutions has been used in quantum technologies, in particular as a quantum memory as well as quantum cryptography \cite{Vasile}. Such non-Markovian channels are also being  used as resources in quantum information processing \cite{Resource_1, Resource_2, MR4209138, berk}.   Hence it has become an important question to identify whether a given one parameter family of quantum evolution is Markovian or not. We call such family as \emph{dynamical maps} and give a more formal definition later (\cref{def1}).

\par In classical probability theory, Markov process is well studied.   In quantum theory, dynamics of a system is described by dynamical maps which is a family of completely positive trace preserving linear maps acting on complex separable Hilbert space.  There are multiple approaches to address quantum non-Markovianity.  It precisely depends on the context under study. The hierarchy between various definitions of non-markovianity has been studied in literature \cite{Hall2018}.  Among them there is an approach by Breuer-Laine-Piilo (BLP)\cite{BLP2009} which considers the change in distinguishability of quantum states during the dynamics. If there is no backflow of information, the distinguishabilty of states decreases during the dynamics. This is a signature of Markovianity. Another approach is by Rivas-Huelga-Plenio (RHP) \cite{RHP} in which the dynamics is expressed in terms of the concatenation of positive maps. This gives rise to the idea of propagator of the dynamics and the notion of Complete positive (CP) divisibility. CP divisibility of a dynamical map is often considered as quantum Markovianity. This approach is known as divisibility approach .  If the environment is large compared to the system and assumed to be stationary i.e. there is no change in the environment, and if the coupling between system and environment is week, it can be shown that the Born-Markov approximation \cite{bru3} leads
to the complete positive (CP) divisibility of the dynamics\cite{Breuer_review,Alonso_review,Rivas2014,Darek2022}. On the contrary, beyond the limit of weak coupling and
large stationary environment, there is no validation of this approximation and CP-divisibility may break down. This causes non-Markovian information backflow from the environment to the system. CP-divisibility breaking is  necessary to have information backflow. This implies whenever distinguishability of system state increases, it implies the information is coming back from the environment and the divisibility of the dynamics has been broken.

 \par Both of these approaches provide a quantification or measure of non-Markovianity. In general they do not agree. But it is important to mention that they are connected. The connection between these two approaches has been a line of study \cite{Rivas2011, Rivas2014,CRS-prl-18,Sagnik2019,Darek2022}.  It has been shown that these two measures can be reconciled. If we restrict ourselves in invertible dynamical maps, these two notions are equivalent. Recently Chru{\'s}ci{\'n}ski et al. have established the connection between these two notions for general non-invertible dynamical maps \cite{CRS-prl-18}.

\par Most of the articles mentioned above deals with evolutions for finite dimensional quantum systems. However a good portion of the theory and experimental works in quantum optics and communications deals with  infinite dimensional systems. In this case not much work, even theoretical, has been done to identify non-Markovian quantum system. In this paper we study this problem from a dimension independent point of view. In this way we generalise the notions given in the reference \cite{CRS-prl-18}. We start with basic definitions and a short introduction of the notations, viz. P-divisibility and CP-divisibility of dynamical maps  in \cref{s1}. Further we prove the following results : 
\begin{itemize}
\item A relation between contractivity and positivity of a map from trace class operators i.e. $\mathcal{B}_1(\mathcal{H})$ on the Hilbert space $\mathcal{H}$ to itself. (\cref{thm-pos=contr}).
\item A sufficiency condition for checking P-divisibility and CP-divisibility of a given dynamical map (\cref{th:diff}).
\end{itemize}

\par By using the result stated above we  give explicit examples of divisible/ indivisible family of maps in \cref{s2}. 
\begin{itemize}
\item In particular we give explicit examples of generic non-CP divisible maps in \S\ref{ss3.2}. 
\item This is followed by multi-parameter family of maps given as a combination of various previously known CP-maps \cref{ex-idemp}. 
\item We considered situations when the underlying Hilbert spaces of such families of maps need not be of finite dimensions. Based on the values of the parameters suitably chosen, the family can be P-divisible, CP-divisible, or may not be divisible at all (\cref{Theorem3.2} -- \ref{Theorem3.4}).    This culminates to further examples in \S\ref{ss3.4},  and \S\ref{ss3.6}. 

\item In \S\ref{ss3.7} we give a dynamical map given by a one-parameter Schur product. We show that this gives rise to a non-Markovian family of maps. Even though our map is defined in infinite dimensional systems, the proof technique can be applied for finite dimensional systems as well. 
\end{itemize}
\par A very special class of infinite dimensional channel is Gaussian channels. These channels map  Gaussian states to Gaussian states. In the \cref{gau} we deal with the P-divisibility property of such channels. For completeness we also give a short introduction of Gaussian channels based on Weyl operators. 
\begin{itemize}
\item We consider one parameter family of $n$-mode Gaussian channel and give explicit condition for its P-divisibility (\cref{th4.1}).  
\item We further construct explicit examples of not P-divisible Gaussian channels in $1$-mode (\cref{eg4.1}) and in $2$-modes (\cref{eg4.2}).
\end{itemize}
We also note that our method of detecting P-divisibility for  $n$-mode Gaussian channels is easier and more general than the method given in \cite{PhysRevLett.118.050401} which only deals with $1$-mode Gaussian channels with the same  assumptions. 
\par The paper is organised as follows. \cref{s1} gives basic definitions and results including the sufficiency theorem. In \cref{s2} we give various examples of divisible/indivisible dynamical maps. \cref{gau} deals with the case of Gaussian dynamical maps. For readability, we gave a self contained introduction of Gaussian channels. This section ends with examples of indivisible Gaussian dynamical maps. The paper ends with conclusion in \cref{s5} along with possible future directions. 


\section{Divisibility in infinite dimensional space}\label{s1}
  \par We begin with connecting quantum theory as a non-commutative version of classical probability theory. A (classical) probability space can be expressed in the Kolmogorov triplet format $(\Omega, \mathcal{E}, P)$, where $\Omega$ is the sample space, $\mathcal{E}$ is a $\sigma$-algebra on $\Omega$ called the set of events, and $P$ is the probability measure on it. In the same spirit, quantum systems can be expressed as a triplet $(\mathcal{H}, \mathcal{P(H)},\rho)$; where $\mathcal{H}$ denotes a separable complex Hilbert space of finite and infinite dimensions, $\mathcal{P(H)}$ is the set of all projections, and $\rho$ is a positive semidefinite trace class Hermitian operator with $\tr \rho=1$. $\rho$ is said to be the state of a particle or simply as a state. 
 \par Transformation of quantum systems is represented by a completely positive trace preserving map $\Lambda: \mathcal{B(H)} \to \mathcal{B(K)}$, which is also called as quantum channel. Representation of these maps was independently discovered by Sudarshan, Mathews, and Rau \cite{sud1}, Kraus \cite{kraus}, and Choi \cite{choi1}. This has several applications in quantum information theory -  for more details, we refer to the book of Nielsen and Chuang \cite{NC} or more recent one by Wilde \cite{wilde}. 

\par  To explain quantum evolution physically, we may use the following structure. Let us consider the Hilbert space of the system as $\mathcal{H}_S$ and the auxiliary space as $\mathcal{H}_E$. The state $\rho_S(0)$ is in  $\mathcal{H}_S$ at time $t=0$. A fixed state $\rho_E$ acting on $\mathcal{H}_E$ is used for dilation purpose. Let $H_S$ be the system Hamiltonian, $H_E$ be the Hamiltonian from the environment, and $H_{int}$ be the interaction Hamiltonian between system and environment. The effect of this on the whole system can be written as a global unitary 
 \[U(t) = \exp\left[-\frac{\imath}{\hbar} (H_S + H_E + H_{int})t\right].\]
 
The reduced dynamics can be written as 
\[\rho(t) = \tr_E [U(t) (\rho(0) \otimes \rho_E) U(t)^\dag].\]
The formulation is similar to the one mentioned earlier. In fact one can check that the reduced dynamics can be written in the form 
\[\rho(t) = \Lambda_t(\rho(0)) = \sum_j K_j \rho(0) K_j^\dag,\]
where $\sum_j K_j^\dag K_j = I$, the identity operator on $\mathcal{H}_S$. $K_j$'s are called the Kraus operators of the system. Evolution of the unitary operator in terms of \emph{master equation}, was given by Gorini, Kossakowski, and Sudarshan \cite{MR0406206}, and Lindblad \cite{lindblad}. A combined study of this from a quantum probability point of view can be found in \cite{krp4} and from thermodynamics point of view in \cite{bru3}.

\par Let $\mathcal{H}$ stands for a complex separable Hilbert space and let $\mathcal{B}(\mathcal{H})$ and $\mathcal{B}_1(\mathcal{H})$ be the spaces of bounded linear operators and trace class operators on $\mathcal{H}$ respectively.
\begin{defin} \label{def1}
A dynamical map $\{\Lambda_t\}_{t \ge 0}$ is a family of completely
positive (CP) and trace-preserving (TP) maps acting on the space 
$\mathcal{B}_1(\mathcal{H})$ of trace class operators on the Hilbert space
$\mathcal{H}$.  This family  $\{\Lambda_t\}_{t \ge 0}$ is divisible if 
\[\Lambda_t = \Phi_{t, s} \Lambda_s,\]
where $\Phi_{t, s}: \mathcal{B}_1(\mathcal{H}) \to \mathcal{B}_1(\mathcal{H})$ is a linear map for
every $t \ge s$. 
\end{defin}
\begin{defin}
    A dynamical map $\{\Lambda_t\}_{t \ge 0}$ is P-divisible if that map $\{\Lambda_t\}_{t \ge 0}$ is divisible and the linear map $\{\Phi_{t, s}: \mathcal{B}_1(\mathcal{H}) \to \mathcal{B}_1(\mathcal{H}):0\leq s \leq t\}$ can be chosen positive and trace preserving.
\end{defin}
\begin{defin}
    A dynamical map $\{\Lambda_t\}_{t \ge 0}$ is CP-divisible if that map $\{\Lambda_t\}_{t \ge 0}$ is divisible and the linear map $\{\Phi_{t, s}: \mathcal{B}_1(\mathcal{H}) \to \mathcal{B}_1(\mathcal{H}):0\leq s \leq t\}$ can be chosen  completely positive and trace preserving.
\end{defin}

\begin{theorem}\label{thm-pos=contr}
Let $\Phi$ be a Hermiticity and trace preserving linear map on
$\mathcal{B}_1(\mathcal{H})$. Then $\Phi$ is positive if and only if
$\Phi$ is contractive for all Hermitian operators $X \in
\mathcal{B}_1(\mathcal{H})$.
\end{theorem}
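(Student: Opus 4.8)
The plan is to treat this as the infinite-dimensional version of the classical Kossakowski contractivity criterion and to reduce both implications to the Hahn--Jordan decomposition of a self-adjoint trace class operator together with the elementary identity $\|Y\|_1 = \tr Y$ valid for every positive $Y \in \mathcal{B}_1(\mathcal{H})$. Recall that a self-adjoint $X \in \mathcal{B}_1(\mathcal{H})$ is compact, so the spectral theorem lets us write $X = X_+ - X_-$ with $X_\pm \ge 0$ trace class, $X_+ X_- = 0$, $|X| = X_+ + X_-$, and hence $\|X\|_1 = \tr X_+ + \tr X_-$. Here ``contractive'' is understood in the trace norm on $\mathcal{B}_1(\mathcal{H})$.

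First I would prove the ``only if'' direction. Assuming $\Phi$ is positive, take $X \in \mathcal{B}_1(\mathcal{H})$ Hermitian with decomposition $X = X_+ - X_-$ as above. Positivity gives $\Phi(X_\pm) \ge 0$, so by the triangle inequality together with the identity above,
\[
\|\Phi(X)\|_1 \le \|\Phi(X_+)\|_1 + \|\Phi(X_-)\|_1 = \tr \Phi(X_+) + \tr \Phi(X_-).
\]
Trace preservation turns the right-hand side into $\tr X_+ + \tr X_- = \|X\|_1$, which is precisely contractivity on Hermitian operators.

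Next the ``if'' direction. Assume $\|\Phi(X)\|_1 \le \|X\|_1$ for every Hermitian $X \in \mathcal{B}_1(\mathcal{H})$ and fix $\rho \in \mathcal{B}_1(\mathcal{H})$ with $\rho \ge 0$. Since $\Phi$ is Hermiticity preserving, $\Phi(\rho)$ is self-adjoint and trace class, so we may write its Hahn--Jordan decomposition $\Phi(\rho) = A_+ - A_-$ with $A_\pm \ge 0$ and $A_+ A_- = 0$. Then $\|\Phi(\rho)\|_1 = \tr A_+ + \tr A_-$, while trace preservation yields $\tr A_+ - \tr A_- = \tr \Phi(\rho) = \tr \rho = \|\rho\|_1$. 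Combining these with contractivity,
\[
\tr A_+ + \tr A_- = \|\Phi(\rho)\|_1 \le \|\rho\|_1 = \tr A_+ - \tr A_- ,
\]
so $\tr A_- \le 0$; since $A_- \ge 0$ this forces $\tr A_- = 0$ and hence $A_- = 0$. Therefore $\Phi(\rho) = A_+ \ge 0$, so $\Phi$ is positive.

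The argument is essentially dimension-free, so the only point requiring care is the supporting functional analysis in the separable Hilbert space setting: that every self-adjoint trace class operator really admits a Hahn--Jordan decomposition into positive trace class operators with orthogonal ranges (a consequence of the spectral theorem for compact self-adjoint operators), that $\|Y\|_1 = \tr Y$ for positive trace class $Y$, and that all intermediate operators remain in $\mathcal{B}_1(\mathcal{H})$ so that the trace-norm estimates are meaningful. I do not anticipate a genuine obstacle; essentially all the work lies in assembling these standard facts correctly.
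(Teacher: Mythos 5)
Your proof is correct and follows essentially the same route as the paper: the forward direction via the Jordan decomposition $X = X_+ - X_-$, the triangle inequality, and trace preservation; the converse by decomposing $\Phi(\rho) = A_+ - A_-$ and using trace preservation plus contractivity to force $\tr A_- = 0$. The only difference is cosmetic — the paper first derives $\|\rho\|_1 \le \|\Phi(\rho)\|_1$ and then combines with contractivity to get equality of $\tr|\Phi(\rho)|$ and $\tr\Phi(\rho)$, whereas you chain the inequalities directly — so there is nothing substantive to add.
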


\begin{proof}
 Let us assume that $\Phi$ is positive and $X \in \mathcal{B}_1(\mathcal{H})$  be an arbitrary Hermitian operator. By spectral theorem we can write 
 \[X = X^{+} - X^{-}\]
where $X^{+}$ and $X^{-}$ are positive operators such that $X^{+}X^{-}=O$. Here $O$ stands for zero operator. So, $\vert X \vert = X^{+} + X^{-}$. Now,
\[ \| \Phi (X) \| _1 = \| \Phi (X^{+} - X^{-}) \|_1 = \| \Phi (X^{+}) -
\Phi(X^{-}) \|_1 = \tr | \Phi (X^{+}) - \Phi(X^{-}) | \]
Here $\| . \|_1$ stands for trace norm and ``$\tr(.)$" stands for
trace of an operator. We know that from triangle inequality,
\[ \| \Phi (X^{+}) - \Phi(X^{-}) \|_1 \leq \| \Phi (X^{+}) \|_1 + \|
\Phi (X^{-}) \| _1 \] 
Since $X^{+}$ and $X^{-}$ are positive operators and $\Phi$ is a trace preserving positive linear map, we have 
\begin{eqnarray*}
\| \Phi (X^{+}) \|_1 + \| \Phi (X^{-}) \|_1 &=& \tr [\Phi (X^{+})] +
\tr [\Phi (X^{-})] \\
&=& \tr[X^{+}] + \tr[X^{-}]\\
&=& \tr[(X^{+}+X^{-})] \\
&=& \tr | X | \\
&=& \| X \|_1.
\end{eqnarray*}
Hence we have shown that $\| \Phi (X) \| _1 \leq \| X \|_1$ i.e $\Phi$
is contractive.

\par Conversely, let $\Phi$ be contractive on
$\mathcal{B}_1(\mathcal{H})$.  Let $X \in \mathcal{B}_1(\mathcal{H})$
be a positive operator  and $\Phi$ be trace preserving map  i.e. $\tr
[\Phi (X)]= \tr [X]$. Now
\[
\| X \|_1 = \tr |X| = \tr X = \tr [\Phi (X)] \leq \tr |\Phi (X)| = \|
\Phi (X)\|_1,
\]
as $X \in \mathcal{B}_1(\mathcal{H}) $ and   $ X \geq 0$ . 

\par Since  $\Phi$ is contractive for all Hermitian operators $ X \in \mathcal{B}_1(\mathcal{H})$,
\begin{eqnarray*}
\| \Phi (X) \|_1 \leq  \|  X \|_1
\end{eqnarray*}
Hence for any positive operator $X$, $\tr|\Phi(X)|=\| \Phi (X) \|_1 = \|  X \|_1 = \tr[X]  = \tr[\Phi (X)]$. By the Hermiticity preservation of $\Phi$, $Y=\Phi(X)$ is Hermitian and can be written as the difference of its positive and negative parts $Y_+$, $Y_-$.
Then
\[
\tr Y_+ + \tr Y_-=\tr|\Phi(X)|  = \tr[\Phi (X)] = \tr Y_+ - \tr Y_-,
\]
which implies $Y_-=0$ and therefore $\Phi(X)= Y = Y_+ \ge 0$.
\end{proof}

\begin{theorem} \label{th:diff}
Let $\{\Lambda_t\}_{t \geq 0}$ be a family of  dynamical maps.
\begin{enumerate}
\item 
If $\{\Lambda_t\}_{t \geq 0}$ is P-divisible, then the function $t\mapsto \|\Lambda_t (X)\|_1$ is non-increasing for any trace class Hermitian operator $X \in \mathcal{B}_1(\mathcal{H})$. If this function is differentiable, then
\begin{equation}\label{eq1}
\frac{\dd}{\dd t} \|  \Lambda_t (X) \|_1 \leq 0.
\end{equation} 
\item
If $\{\Lambda_t\}_{t \geq 0}$ is CP-divisible, then the function $t\mapsto \| \mathcal{I} \otimes \Lambda_t (Y)\|_1$ is non-increasing for any trace class Hermitian operator $Y \in \mathcal{B}_1(\mathcal{H}\otimes\mathcal{H})$. If this function is differentiable, then
\begin{equation}\label{eq2}
\frac{\dd}{\dd t} \| \mathcal{I} \otimes \Lambda_t (Y) \|_1 \leq 0.
\end{equation}
\end{enumerate}
If $\{\Lambda_t\}_{t \geq 0}$ is furthermore surjective, then the converses also hold.
\end{theorem}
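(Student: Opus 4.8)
The plan is to use \cref{thm-pos=contr} as the single bridge between positivity and trace-norm contractivity, and to treat the P-divisible and CP-divisible cases in parallel, the latter by ampliating every map. For the forward implications, suppose $\{\Lambda_t\}$ is P-divisible and fix $t\ge s$, so $\Lambda_t=\Phi_{t,s}\Lambda_s$ with $\Phi_{t,s}$ positive and trace preserving. A positive map is automatically Hermiticity preserving (write a Hermitian $X$ as $X_+-X_-$; its image is a difference of positive operators, hence self-adjoint), so \cref{thm-pos=contr} applies to $\Phi_{t,s}$ and gives $\|\Phi_{t,s}(Z)\|_1\le\|Z\|_1$ for every Hermitian $Z\in\mathcal{B}_1(\mathcal{H})$. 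For Hermitian $X\in\mathcal{B}_1(\mathcal{H})$ the operator $\Lambda_s(X)$ is again Hermitian and trace class (each $\Lambda_s$ is CP, hence Hermiticity preserving), so $\|\Lambda_t(X)\|_1=\|\Phi_{t,s}(\Lambda_s(X))\|_1\le\|\Lambda_s(X)\|_1$; thus $t\mapsto\|\Lambda_t(X)\|_1$ is non-increasing and its derivative, where it exists, is $\le 0$. For the CP-divisible case I would run the same argument with $\mathcal{I}\otimes\Phi_{t,s}$ in place of $\Phi_{t,s}$: it is positive because $\Phi_{t,s}$ is completely positive, trace preserving because $\tr_2\circ(\mathcal{I}\otimes\Phi_{t,s})=\tr_2$ whenever $\Phi_{t,s}$ is trace preserving, hence Hermiticity preserving, so \cref{thm-pos=contr} on $\mathcal{B}_1(\mathcal{H}\otimes\mathcal{H})$ together with $(\mathcal{I}\otimes\Phi_{t,s})(\mathcal{I}\otimes\Lambda_s)=\mathcal{I}\otimes\Lambda_t$ and Hermiticity preservation of $\mathcal{I}\otimes\Lambda_s$ yields $\|(\mathcal{I}\otimes\Lambda_t)(Y)\|_1\le\|(\mathcal{I}\otimes\Lambda_s)(Y)\|_1$ for Hermitian $Y$.

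For the converse of part (1), assume each $\Lambda_t$ is surjective and $t\mapsto\|\Lambda_t(X)\|_1$ is non-increasing for every Hermitian $X$. First I would show $\ker\Lambda_s\subseteq\ker\Lambda_t$ for $t\ge s$: if $\Lambda_s(X)=0$ with $X$ Hermitian then $\|\Lambda_t(X)\|_1\le\|\Lambda_s(X)\|_1=0$, and a general $X\in\ker\Lambda_s$ splits as $X=X_1+\ii X_2$ into Hermitian parts which, by Hermiticity preservation of $\Lambda_s$, again lie in $\ker\Lambda_s$ and hence in $\ker\Lambda_t$. Surjectivity of $\Lambda_s$ then makes the prescription $\Phi_{t,s}(\Lambda_s(X)):=\Lambda_t(X)$ a well-defined linear map on $\mathcal{B}_1(\mathcal{H})$ satisfying $\Phi_{t,s}\Lambda_s=\Lambda_t$. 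It preserves trace since $\tr\Phi_{t,s}(\Lambda_s(X))=\tr\Lambda_t(X)=\tr X=\tr\Lambda_s(X)$, and it preserves Hermiticity because any Hermitian $Y=\Lambda_s(X)$ also equals $\Lambda_s\bigl(\tfrac12(X+X^{*})\bigr)$, a value of $\Lambda_t$ on a Hermitian operator. Finally, for Hermitian $Y=\Lambda_s(X)$ with $X$ Hermitian the hypothesis gives $\|\Phi_{t,s}(Y)\|_1=\|\Lambda_t(X)\|_1\le\|\Lambda_s(X)\|_1=\|Y\|_1$, so $\Phi_{t,s}$ is contractive on Hermitian operators and therefore positive by \cref{thm-pos=contr}; hence $\{\Lambda_t\}$ is P-divisible.

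For the converse of part (2), taking $Y=\omega\otimes X$ with $\omega$ a fixed state and $X$ Hermitian, and using $\|\omega\otimes\Lambda_t(X)\|_1=\|\Lambda_t(X)\|_1$, shows that the hypothesis of (2) already contains that of (1); so the previous paragraph supplies the propagator $\Phi_{t,s}$ with $\Phi_{t,s}\Lambda_s=\Lambda_t$, positive and trace preserving. It then remains to promote positivity to complete positivity. I would argue one ancilla block at a time: restricting the given monotonicity to Hermitian $Y$ supported on $V\otimes\mathcal{H}$ for an $n$-dimensional subspace $V\subseteq\mathcal{H}$, the ampliation $\operatorname{id}_n\otimes\Lambda_s$ is surjective on $\mathcal{B}_1(V\otimes\mathcal{H})\cong M_n(\mathcal{B}_1(\mathcal{H}))$ (it acts entrywise), and exactly the reasoning of the converse of (1) — $\operatorname{id}_n\otimes\Phi_{t,s}$ is trace and Hermiticity preserving, and contractive on Hermitian elements via $\|(\operatorname{id}_n\otimes\Phi_{t,s})(\operatorname{id}_n\otimes\Lambda_s)(Z)\|_1=\|(\operatorname{id}_n\otimes\Lambda_t)(Z)\|_1\le\|(\operatorname{id}_n\otimes\Lambda_s)(Z)\|_1$ — shows $\operatorname{id}_n\otimes\Phi_{t,s}$ is positive by \cref{thm-pos=contr}. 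Since this holds for every $n$ (and $n=\dim\mathcal{H}$ already suffices when $\mathcal{H}$ is finite dimensional, by Choi's criterion), $\Phi_{t,s}$ is completely positive, so $\{\Lambda_t\}$ is CP-divisible.

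The step I expect to require the most care is precisely the surjectivity of the relevant ampliation of $\Lambda_s$: the block-by-block device above is what lets me sidestep the genuinely infinite-dimensional statement that $\mathcal{I}\otimes\Lambda_s$ is onto $\mathcal{B}_1(\mathcal{H}\otimes\mathcal{H})$ (which could alternatively be obtained from the identification $\mathcal{B}_1(\mathcal{H}\otimes\mathcal{H})\cong\mathcal{B}_1(\mathcal{H})\,\widehat{\otimes}_{\pi}\,\mathcal{B}_1(\mathcal{H})$ and the stability of surjectivity under projective tensor products). Everything else — Hermiticity preservation surviving each construction, trace preservation of the ampliations, the boundedness of $\Phi_{t,s}$ needed before ampliating, and the passage from ``non-increasing'' to ``derivative $\le 0$'' — is routine.
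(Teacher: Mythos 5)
Your proof is correct and follows the same overall strategy as the paper: the forward implications are identical (positivity of the propagator plus \cref{thm-pos=contr} gives contractivity on Hermitian operators, hence monotonicity of $t\mapsto\|\Lambda_t(X)\|_1$), and the converses likewise hinge on the kernel-inclusion characterisation of divisibility, surjectivity of $\Lambda_s$, and \cref{thm-pos=contr}. The differences are in execution. Where the paper argues the converse by contradiction --- assume $\Phi_{t,s}$ is not positive and invoke \cref{thm-pos=contr} to produce a Hermitian operator whose trace norm increases --- you construct $\Phi_{t,s}$ explicitly via $\Phi_{t,s}(\Lambda_s(X)):=\Lambda_t(X)$ and verify well-definedness, trace preservation, Hermiticity preservation and contractivity before applying the same theorem; this is the same mathematics made more explicit, and your observation that any Hermitian element of the range is $\Lambda_s$ of a Hermitian operator is a detail the paper leaves implicit. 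The one place where your route genuinely diverges, and improves on the paper, is the converse of (2): the paper asserts without proof that surjectivity of $\Lambda_s$ is ``equivalent to the surjectivity of $\mathcal{I}\otimes\Lambda_s$'' on all of $\mathcal{B}_1(\mathcal{H}\otimes\mathcal{H})$, which in infinite dimensions is a nontrivial claim about the full ampliation. Your block-by-block argument with finite-dimensional ancillas $V\subseteq\mathcal{H}$ needs only the surjectivity of $\mathrm{id}_n\otimes\Lambda_s$ on $M_n(\mathcal{B}_1(\mathcal{H}))$, which is immediate from the entrywise action, and it delivers exactly the standard definition of complete positivity ($\mathrm{id}_n\otimes\Phi_{t,s}$ positive for every $n$). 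In that respect your version closes a gap the paper's argument glosses over.
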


Let us recall the following fact which follows from basic linear algebra.

\begin{lemma}
    A family of dynamical maps $ \{\Lambda_t \}_{t\geq 0}$ on $\mathcal{B}_1(\mathcal{H})$ is divisible if and only if $ \mathrm{Ker}(\Lambda_s)\subseteq
\mathrm{Ker}(\Lambda_{t})$ for $t>s$.
\end{lemma}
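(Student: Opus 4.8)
The plan is to prove the two implications separately; both are short, so the real content is just organising the bookkeeping.

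For the direction ``divisible $\Rightarrow$ kernel inclusion'', I would simply unwind the definition: if $\Lambda_t = \Phi_{t,s}\Lambda_s$ with $\Phi_{t,s}$ linear, then any $X$ with $\Lambda_s(X) = 0$ satisfies $\Lambda_t(X) = \Phi_{t,s}(\Lambda_s(X)) = \Phi_{t,s}(0) = 0$, so $\mathrm{Ker}(\Lambda_s) \subseteq \mathrm{Ker}(\Lambda_t)$ whenever $t \ge s$, in particular for $t > s$.

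For the converse, fix $t > s$ with $\mathrm{Ker}(\Lambda_s) \subseteq \mathrm{Ker}(\Lambda_t)$ and build $\Phi_{t,s}$ by hand. First I would define it on the subspace $\mathrm{Ran}(\Lambda_s) \subseteq \mathcal{B}_1(\mathcal{H})$ by the formula $\Phi_{t,s}(\Lambda_s(X)) := \Lambda_t(X)$. The one thing that needs checking is that this is well defined: if $\Lambda_s(X) = \Lambda_s(X')$, then $X - X' \in \mathrm{Ker}(\Lambda_s) \subseteq \mathrm{Ker}(\Lambda_t)$, so $\Lambda_t(X) = \Lambda_t(X')$; linearity on $\mathrm{Ran}(\Lambda_s)$ is then immediate from linearity of $\Lambda_s$ and $\Lambda_t$. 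Next I would extend $\Phi_{t,s}$ to a linear map on all of $\mathcal{B}_1(\mathcal{H})$ --- e.g.\ pick (via Zorn's lemma) an algebraic complement $V$ of $\mathrm{Ran}(\Lambda_s)$ in $\mathcal{B}_1(\mathcal{H})$ and set $\Phi_{t,s}$ equal to $0$ on $V$. By construction $\Phi_{t,s}\Lambda_s = \Lambda_t$, which is exactly divisibility in the sense of \cref{def1}; the case $t = s$ is handled identically (or one just takes $\Phi_{s,s}$ to be the identity on $\mathrm{Ran}(\Lambda_s)$, extended by $0$).

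I do not expect a genuine obstacle here --- the argument is the usual factorisation through the quotient $\mathcal{B}_1(\mathcal{H})/\mathrm{Ker}(\Lambda_s)$ --- but the single point worth stating carefully is that, in infinite dimensions, extending $\Phi_{t,s}$ from $\mathrm{Ran}(\Lambda_s)$ to the whole space relies on the existence of a vector-space complement (hence on the axiom of choice), and that \cref{def1} asks only for $\Phi_{t,s}$ to be \emph{linear}: no boundedness, positivity, or trace preservation is demanded, which is why this extension step is free. Those stronger properties are exactly what fails to be automatic and what the subsequent results are about.
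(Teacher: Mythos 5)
Your argument is correct and is exactly the standard factorisation-through-the-kernel argument that the paper itself does not write out but delegates to the cited reference (Greub, Chapter II, Section 2.4); your closing remark that $\Phi_{t,s}$ need only be linear --- hence neither unique nor bounded --- matches the paper's own note following the lemma. Nothing to add.
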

For a proof see, e.g, \cite[Chapter II, Section 2.4]{greub}. Note that in general $V_{s,t}$ in not uniquely determined and need not be bounded.

\begin{proof}{ of \cref{th:diff}}
(1)
Let $ \{ \Lambda_t \}_{ t \geq 0 }$  be P-divisible. This implies,
\[ \Lambda_{t} = \Phi_{t, s} \Lambda_s, \qquad t > s,\]
where $\Phi_{t, s}$ is a positive and trace preserving map on $\mathcal{B}_1(\mathcal{H})$. Now,
\[
\| \Lambda_{t}(X) \|_1 = \| \Phi_{t, s} \Lambda_s (X) \|_1 \leq \| \Lambda_s (X) \|_1
\]
for all Hermitian $X$ as $\Phi_{t, s} $ is positive and trace
preserving. So, $\{ \| \Lambda_t (X) \|_1 \}_{t \geq 0}$ is a
decreasing function and, if it is differentiable,  
\[\frac{\dd}{\dd t} \| \Lambda_t (X) \|_1 \leq 0
\]
holds.

\par Conversely, let us assume that  $t\mapsto \|\Lambda_t (X)\|_1$ is non-increasing for any Hermitian operator $X\in \mathcal{B}_1(\mathcal{H})$.  We know that a dynamical
map is divisible if and only if $\mathrm{Ker}(\Lambda_s)\subseteq
\mathrm{Ker}(\Lambda_{t})$. If
$\{\Lambda_t\}_{t \geq 0}$ is not divisible then there exist a $X\in
\mathcal{B}_1(\mathcal{H})$ such that  $\Lambda_s(X) = 0$, whereas
$\Lambda_{t}(X) \neq 0$ for some $ t>s $. Thus
$\|\Lambda_s(X)\|_1 < \|\Lambda_{t}(X)\|_1$ which contradicts the non-increasing nature of $\| \Lambda_t (X) \|_1 \leq 0$. So $\{\Lambda_t\}_{t \geq 0}$ is divisible.

\par Assume now that $\{\Lambda_t\}_{t \geq 0}$ is surjective and not P-divisible. Then there exists some $t > s $ such that 
\[\Lambda_{t} = \Phi_{t, s} \Lambda_s,  \]
where $\Phi_{t, s} $ is not positive. By Theorem \ref{thm-pos=contr} and the surjectivity of $\Lambda_s$, there exists $X\in\mathcal{B}_1(\mathcal{H})$ such that 
\begin{equation}
\| \Lambda_{t}(X) \|_1 = \| \Phi_{t, s}\Lambda_{s}(X) \|_1 > \|\Lambda_{s}(X)\|_1,
\end{equation}
which contradicts the non-increasing nature of  $t\mapsto \|\Lambda_t (X)\|_1$. Therefore $\Phi_{t,s}$ is positive and hence $\{ \Lambda_t\}_{t \geq 0}$ is P-divisible.

\par (2) Let $ \{ \Lambda_t \}_{ t \geq 0 }$  be CP divisible. This implies,
\[ \Lambda_{t} = \Phi_{t, s} \Lambda_s \quad t > s.\]
where $\Phi_{t, s}$ is a completely positive and trace preserving map on $\mathcal{B}_1(\mathcal{H})$. Now,
\begin{eqnarray*}
\|(\mathcal{I} \otimes \Lambda_{t})(Y) \|_1 = \| (\mathcal{I}
\otimes\Phi_{t, s}\Lambda_{s})(Y) \|_1 &=& \|(\mathcal{I}
\otimes\Phi_{t, s})(\mathcal{I} \otimes\Lambda_{s})(Y) \|_1\\
& \leq & \|(\mathcal{I} \otimes\Lambda_{s})(Y)\|_1
\end{eqnarray*}
for all hermitian trace class operators $Y$ on product space. As
$\Phi_{t, s} $ is completely positive and trace preserving.
$\mathcal{I} \otimes \Lambda_t$   is positive. So $\{ \|(\mathcal{I}
\otimes\Lambda_t)(Y) \|_1 \}_{t \geq 0}$ is a decreasing function and
$\frac{d}{dt} \| (\mathcal{I} \otimes \Lambda_t )(Y) \|_1 \leq 0$, if this function is differentiable.

\par Conversely, assume that $t\mapsto \|(\mathcal{I} \otimes \Lambda_t) (Y)\|_1$ is non-increasing for all hermitian trace class operator $Y\in\mathcal{B}_1(\mathcal{H}\otimes\mathcal{H})$.  Then, similarly as in Part (1), we can show that $\{(\mathcal{I} \otimes\Lambda_t)\}_{t\geq 0} $ is divisible, since 
\[(\mathcal{I} \otimes\Lambda_{t})=(\mathcal{I} \otimes\Phi_{t, s})(\mathcal{I} \otimes\Lambda_s)=(\mathcal{I} \otimes \Phi_{t,s} \Lambda_s)
\qquad \Leftrightarrow \qquad \Lambda_{t} = \Phi_{t,s} \Lambda_s \]
where $t >s$ and $\Phi_{t,s}:\mathcal{B}_1(\mathcal{H})\rightarrow
\mathcal{B}_1(\mathcal{H})$ is a linear map. So $\{\Lambda_t\}_{t\geq
0}$ is divisible.
\par Let us assume that $\{\Lambda_t\}_{t \geq 0}$ is surjective and not CP-divisible. Then, there exists some $t > s $ such that 
\[\Lambda_{t} = \Phi_{t, s} \Lambda_s  \]
where $\Phi_{t, s} $ is not completely positive. By Theorem \ref{thm-pos=contr} and the surjectivity of $\Lambda_s$, which is equivalent to the surjectivity of $\mathcal{I} \otimes\Lambda_s$, there exists $Y\in\mathcal{B}_1(\mathcal{H}\otimes\mathcal{H})$ such that 
\begin{eqnarray*}
\|(\mathcal{I} \otimes \Lambda_{t})(Y) \|_1 = \| (\mathcal{I}
\otimes\Phi_{t, s}\Lambda_{s})(Y) \|_1 &=& \|(\mathcal{I}
\otimes\Phi_{t, s})(\mathcal{I} \otimes\Lambda_{s})(Y) \|_1\\
& >& \|(\mathcal{I} \otimes\Lambda_{s})(Y)\|_1
\end{eqnarray*}
which contradicts the initial assumption of not-increasing property of the map   $t\mapsto \|(\mathcal{I} \otimes \Lambda_t)Y\|_1$.
\end{proof}

\section{Examples of divisible and not-divisible channels} \label{s2}
\par We use the above methods to check divisibility/non-divisibility of some classes of dynamical maps. We start with a simple example of unitary evolution. 
\subsection{One parameter unitary evolution}
Let  $\mathcal{H}$ be a separable Hilbert space.  Consider a family of Dynamical maps $\{\Lambda_t :t\geq 0\}$ on $\mathcal{B}_1(\mathcal{H})$ defined by unitary adjoining 
\[\Lambda_t(X)=\text{Ad}_{U_t}(X)= U_tX{U_t}^*.\]
$\{U_t\}$ is a time $t\ge 0$ dependent semigroup of unitary operators on $\mathcal{H}$. Since $\|\cdot \|_1$ is unitary invariant,  $ \| \Lambda_t (X) \|_1 = \|X\|_1 $ which is  $t$ independent. Hence 
      \[
        \frac{\dd}{\dd t}  \| \Lambda_t (X) \|_1 = \frac{\dd}{\dd t}  \| X \|_1=0. 
     \]
As $\{\Lambda_t\}$ is surjective, hence by  \cref{th:diff} $\{\Lambda_t\}$ is P-divisible. 
    
   Choose $ \mathcal{B}_1(\mathcal{H}\otimes \mathcal{H}) \ni Y=\sum_{i=1}^\infty Y_i^1\otimes Y_i^2$, $Y_i^1$  and  $Y_i^2 \in \mathcal{B}_1(\mathcal{H})$ where convergence is under strong operator topology. We write
     \begin{eqnarray*}
      \| (\mathcal{I} \otimes \Lambda_t) (Y) \|_1 &=& \left\|(\mathcal{I} \otimes \Lambda_t) \left(\sum_{i=1}^{\infty} Y_i^1\otimes Y_i^2 \right)\right\|_1\\
      &=&\left\| \sum_{i=1}^{\infty} Y_i^1\otimes\Lambda_t (Y_i^2 )\right\|_1\\
     &= &\left\| \sum_{i=1}^{\infty} Y_i^1\otimes\ U_t Y_i^2U_t^* \right\|_1  \\
     &=&\left\| (I_{\mathcal{H}}\otimes U_t) \left(\sum_{i=1}^{\infty} Y_i^1\otimes Y_i^2 \right) (I_{\mathcal{H}}\otimes U_t^{*})\right\|_1 \\
     &=&\left\| \sum_{i=1}^{\infty} Y_i^1\otimes Y_i^2 \right\|_1 \\
    &=&\left\| Y \right\|_1,
         \end{eqnarray*}
where, $I_{\mathcal{H}}\in \mathcal{B}(\mathcal{H})$ is the identity operator on $H$. This is $t$ independent and hence 
 \[
         \frac{\dd}{\dd t} \left\| (\mathcal{I} \otimes \Lambda_t) (Y) \right\|_1=0.
 \]
 As  $\{\Lambda_t\}$ is surjective, it is CP-divisible by \cref{th:diff}.

\begin{rem}
    It is easy to see that here we have to choose the family of maps $\{\Phi_{t, s}: \mathcal{B}_1(\mathcal{H}) \to \mathcal{B}_1(\mathcal{H}):0\leq s \leq t\}$ as follows
    \[\Phi_{t, s}(X)= U_t{U_s}^*XU_s{U_t}^* ,\]
    for which the family of dynamical maps $\{ \Lambda_t: \mathcal{B}_1(\mathcal{H}) \to \mathcal{B}_1(\mathcal{H}): t\geq 0 \}$ satisfy the relation
    \[\Lambda_t = \Phi_{t, s} \Lambda_s, \qquad \text{where} \quad  0\leq s \leq t.\]
\end{rem}
\subsection{Example of a generic non CP-divisible dynamical map} \label{ss3.2}
\par Consider a Countable dense subspace $\mathcal{A}=\{\ket{i}\hspace{-3pt}\bra{j}:i,j=0,1,2,\ldots,n,\ldots \}$ of $\mathcal{H}$, where $\ket{i}$ is a column vector whose $(i+1)$th entry is $1$ and rest of the entries of the sequence are zeros. Define  Kraus operators,
\begin{eqnarray*}
 E_1 &=& \ket{0}\hspace{-3pt}\bra{1}+\ket{1}\hspace{-3pt}\bra{0}+ \sum_{i=2}^{\infty} \ketbra{i},   \\
 E_2 &=& \frac{1}{\sqrt{2}}\sum_{i,j=0}^1 \ket{i}\hspace{-3pt}\bra{j}+ \sum_{i=2}^{\infty} \ketbra{i},\\
 E_3 &=& \frac{1}{\sqrt{2}}\sum_{i,j=0}^1 (-1)^{ i+j\equiv 0\text{mod}{2}}\ket{i}\hspace{-3pt}\bra{j}+ \sum_{i=2}^{\infty} \ketbra{i}.
\end{eqnarray*}
Define the family of maps $\{\Lambda_t : \mathcal{B}_1(\mathcal{H}) \rightarrow \mathcal{B}_1(\mathcal{H}) : t\geq 0\} $ for some separable Hilbert Space $\mathcal{H}$ as
\begin{equation}
  \Lambda_t \left(X \right) = \eta_t E_1 X E_1^* + \frac{\kappa_t}{2} E_2 X E_2^* + \frac{\kappa_t}{2} E_3 X E_3^*,
\end{equation}
with two time-dependent functions $\eta_t$ and $\kappa_t$ such that $\eta_t , \kappa_t \geq 0$ and $\eta_t+\kappa_t = 1$.
\par To check the non-Markovianity of this system of dynamical maps, take the unnormalised states  $\ket \psi = \ket{00}+\ket{11}$ and $\ket \phi = \ket {01} + \ket {10}$. Define  $Y= \ketbra{\psi} - \ketbra{\phi}$. Note that $\left\| (\mathcal{I} \otimes \Lambda_{t} ) Y \right\|_1= 4 \eta_t$. Choose  $\eta_t = t$. Then, 
\[ \frac{d}{dt} \left\| (\mathcal{I} \otimes \Lambda_t) (Y) \right\|_1 = 4>0  \]
for some neighbourhood of $t>0$ . This shows that $\{\Lambda_t\}_{t\ge0}$ is not CP divisible by \cref{th:diff}.

\subsection{Dynamical maps defined from an idempotent map}\label{ex-idemp}
\par We start with finite-dimensional Hilbert spaces, where the (complete) positivity of our dynamical maps can be defined  by known criteria, see, e.g., \cite{skst12}. Letting the dimension go to infinity in the next step, we can dynamical maps whose positivity properties follow from the closedness of the cones we study, or from arguments and results in, e.g., the paper \cite{fried19}.

For $ k\ge 1$, define $\mathcal{H}_i\cong\mathbb{C}^k$, with an orthonormal basis $\{e_{i,j}, j=1,\ldots,k\}$. Define $\mathcal{H}=\bigoplus_{i-1}^n \mathcal{H}_i \cong \mathbb{C}^{nk}$. Denote the matrix units with respect to  the orthonormal basis  by $E_{(i,j),(r,s)}$, $i,\,r=1,\ldots,n$, $j,\,s=1,\ldots,k$.  We construct dynamical maps from the following idempotent channels on $\mathcal{H}$.
\begin{enumerate}
\item 
The \emph{dephasing channel} $D:\mathcal{B}(\mathcal{H})\to \mathcal{B}(\mathcal{H})$, $D(X)=\mathrm{Tr}(X) I_{\mathcal{H}}$, where $\mathrm{Tr}$ denotes the normalised trace, $\mathrm{Tr}(X)=\frac{1}{nk} \sum_{i=1}^n\sum_{j=1}^k \langle e_{i,j}, X e_{i,j}\rangle$. Its corresponding Choi matrix is given by, 
\[
C_D = \frac{1}{nk} I_\mathcal{H}\otimes I_\mathcal{H}.
\]
\item 
The \emph{block-wise dephasing channel} $B:\mathcal{B}(\mathcal{H})\to \mathcal{B}(\mathcal{H})$, 
\[D_B(X) = \frac{1}{n}\sum_{i=1}^k \mathrm{Tr}_{\mathcal{H}_i}(X) I_{\mathcal{H}_i},\]
 where $I_{\mathcal{H}_i}=\sum_{j=1}^k E_{(i,j),(i,j)}$ is the identity on the subspace $\mathcal{H}_i$  and 
 \[\mathrm{Tr}_{\mathcal{H}_i} (X) = \frac{1}{k} \sum_{j=1}^k \langle e_{i,j}, X e_{i,j}\rangle\]
is the trace there. It has Choi matrix of the form -                
\[
C_B = \frac{1}{nk} \sum_{i=1}^n I_{\mathcal{H}_i}\otimes I_{\mathcal{H}_i}.
\]
This has spectrum\footnote{Since $C_B^2=\frac{1}{nk}C_B$, $\mathrm{tr}(C_B)=k$.} $\sigma(C_B) = \left\{0,\dfrac{1}{nk}\right\}$, where $\dfrac{1}{nk}$ has multiplicity $nk^2$ and $0$ has multiplicity $n(n-1)k^2$.
\item
The \emph{conditional expectation} onto block-diagonal matrices: $E:\mathcal{B}(\mathcal{H})\to \mathcal{B}(\mathcal{H})$ defined as  
\[E(X) =\sum_{i=1}^n I_{\mathcal{H}_i} X I_{\mathcal{H}_i}.\]
It has Choi matrix
\[
C_E = \sum_{i=1}^n \sum_{j,s=1}^k E_{(i,j),(i,s)}\otimes E_{(i,j)(i,s)}.
\]
The spectrum\footnote{Since $C_E^2=kC_E$, $\mathrm{tr}(C_E)=nk$.} of this Choi matrix is $\sigma(C_E)=\{k,0\}$, with multiplicities of $k$ is $n$ and multiplicity of $0$ is $n(nk^2-1)$. 
\item 
The identity channel $I=\mathrm{id}_\mathcal{H}$, which has Choi matrix
\[
C_I = \sum_{i,r=1}^n \sum_{j,s=1}^k E_{(i,j),(r,s)}\otimes E_{(i,j),(r,s)},
\]
with spectrum $\sigma(C_I) = \{nk,0\}$ where multiplicity of $nk$ is $1$ and multiplicity of  $0$ is $n^2k^2-1$.
\end{enumerate}

%
%
%

\begin{theorem}\label{Theorem3.2}
 Let, $a,b,c,d\in\mathbb{R}$ and $\Phi(a,b,c,d) = aI + b E + cB + dD$ be defined as above. Then $\Phi$ is completely positive if and only if
\[
nka+kb+\frac{c+d}{nk} \geq 0 
\]
\[kb+\frac{c+d}{nk}\geq 0
\]
\[c+d\geq 0 \; \text{and} \; d\geq 0.
\]
\end{theorem}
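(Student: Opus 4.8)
The plan is to pass to the Choi picture. In finite dimension a linear map on $\mathcal{B}(\mathcal{H})$ is completely positive if and only if its Choi matrix is positive semidefinite, and the Choi correspondence is linear, so
\[
C_\Phi := C_{\Phi(a,b,c,d)} = a\,C_I + b\,C_E + c\,C_B + d\,C_D .
\]
Hence the theorem reduces to deciding when the Hermitian operator $C_\Phi$ on $\mathcal{H}\otimes\mathcal{H}\cong\mathbb{C}^{(nk)^2}$ is nonnegative, i.e. to computing its spectrum.

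The key observation is that the four Choi matrices are simultaneously ``diagonalized'' along one flag of subspaces. Put $\ket{\Omega_i}=\sum_{j=1}^{k} e_{i,j}\otimes e_{i,j}$ and $\ket{\Omega}=\sum_{i=1}^{n}\ket{\Omega_i}$, and set
\[
V_0=\mathbb{C}\ket{\Omega}\ \subseteq\ V_1=\operatorname{span}\{\ket{\Omega_i}:1\le i\le n\}\ \subseteq\ V_2=\bigoplus_{i=1}^{n}\mathcal{H}_i\otimes\mathcal{H}_i\ \subseteq\ V_3=\mathcal{H}\otimes\mathcal{H},
\]
with $P_0\le P_1\le P_2\le P_3=I$ the corresponding orthogonal projections. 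The inclusions are clear, since $\ket{\Omega}$ is a linear combination of the $\ket{\Omega_i}$ and $\ket{\Omega_i}\in\mathcal{H}_i\otimes\mathcal{H}_i$. A short computation, using $\langle\Omega_i|\Omega_{i'}\rangle=k\,\delta_{i,i'}$ and the mutual orthogonality of the projections $I_{\mathcal{H}_i}\otimes I_{\mathcal{H}_i}$, then gives
\[
C_I=nk\,P_0,\qquad C_E=k\,P_1,\qquad C_B=\tfrac{1}{nk}\,P_2,\qquad C_D=\tfrac{1}{nk}\,P_3 ,
\]
which in passing recovers the spectra and traces recorded above.

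Since the projections are nested, $C_\Phi=nk\,a\,P_0+k\,b\,P_1+\tfrac{c}{nk}\,P_2+\tfrac{d}{nk}\,P_3$ is a scalar on each ``shell'' $V_j\ominus V_{j-1}$ (with the convention $V_{-1}=0$), the eigenvalue there being the sum of the coefficients of those $P_\ell$ whose range contains the shell. Reading these off gives $nka+kb+\tfrac{c+d}{nk}$ on $V_0$ (dimension $1$), $kb+\tfrac{c+d}{nk}$ on $V_1\ominus V_0$ (dimension $n-1$), $\tfrac{c+d}{nk}$ on $V_2\ominus V_1$ (dimension $n(k^2-1)$), and $\tfrac{d}{nk}$ on $V_3\ominus V_2$ (dimension $nk^2(n-1)$). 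Consequently $C_\Phi\ge 0$, i.e. $\Phi$ is completely positive, exactly when each of these four numbers is $\ge 0$; since $nk>0$ this is precisely the stated system of inequalities.

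I do not expect a genuine obstacle here: once the flag structure is established the conclusion is forced. The places needing care are the verification of the four scalar identities $C_I=nkP_0,\dots,C_D=\tfrac1{nk}P_3$ (equivalently, the scalar action of each Choi matrix on each shell) and the containments $V_0\subseteq V_1\subseteq V_2\subseteq V_3$, of which $V_1\subseteq V_2$ is the least transparent. One should also bear in mind the degenerate ranges: if $k=1$ the shell $V_2\ominus V_1$ is empty and the inequality $c+d\ge 0$ becomes vacuous, and if $n=1$ the shells $V_1\ominus V_0$ and $V_3\ominus V_2$ are empty; the clean four-condition statement is the one valid for $n,k\ge 2$.
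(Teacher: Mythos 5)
Your proposal is correct and follows essentially the same route as the paper: both reduce complete positivity to positivity of the Choi matrix $C_\Phi=aC_I+bC_E+cC_B+dC_D$ and compute its spectrum, arriving at exactly the same four eigenvalues $nka+kb+\tfrac{c+d}{nk}$, $kb+\tfrac{c+d}{nk}$, $\tfrac{c+d}{nk}$, $\tfrac{d}{nk}$ with multiplicities $1$, $n-1$, $n(k^2-1)$, $nk^2(n-1)$. The one genuine improvement is that where the paper appeals to an unspecified ``suitable row-column transformation'' to reach a block form, you make the simultaneous diagonalization explicit by identifying $C_I=nkP_0$, $C_E=kP_1$, $C_B=\tfrac{1}{nk}P_2$, $C_D=\tfrac{1}{nk}P_3$ as scaled projections onto a nested flag, which also transparently recovers the spectra of $C_B$ and $C_E$ recorded in the footnotes and correctly flags the degenerate cases $k=1$ and $n=1$.
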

\begin{proof}
 $\Phi$ is completely positive if and only if the Choi matrix  $ C_{\Phi}= (\mathcal{I}_{nk}\otimes \Phi)(\ket{\Psi}\bra{\Psi})$ is positive. Using linearity of the construction, we get the the Choi matrix of $\Phi$ is 
\[
    C_{\Phi}= a C_I + b C_E + c C_B + d C_D. 
\]
By suitable row-column transformation, the Choi matrix $ C_{\Phi}$ takes the form
\[\left[(\ketbra{\bm{1}_n})\otimes(a\ket{\bm{1}_n}+b\mathbb{I}_n)+\frac{c+d}{nk} \mathbb{I}_{nk}\right]\oplus \frac{c+d}{nk}\mathbb{I}_{nk(k-1)} \oplus \frac{d}{nk}\mathbb{I}_{nk^2(n-1)}
\]
Here $\ket{\bm{1}_k}=\underbrace{(1,1,\cdots,1)^T}_{k \text{ times}}$.
Eigenvalues of the first term, i.e.
\[ \bigg[\ketbra{\bm{1}_k}\otimes(a\ketbra{\bm{1}_n}+b\mathbb{I}_n)\bigg]+\frac{c+d}{nk} \mathbb{I}_{nk}\] 
are $\left(nka+kb+\dfrac{c+d}{nk}\right), \, \left(kb+\dfrac{c+d}{nk}\right), \, \left(\dfrac{c+d}{nk} \right) $ with multiplicity $1, \,(n-1), \, n(k-1)$. 
Hence eigenvalues of the Choi matrix $C_\Phi$  are $\left(nka+kb+\dfrac{c+d}{nk}\right), \left(kb+\dfrac{c+d}{nk}\right), \left(\dfrac{c+d}{nk}\right)$, and  $\dfrac{d}{nk} $ with multiplicity $1, \, (n-1), \, n(k^2-1), \, nk^2(n-1)$.

Therefore $\Phi$ is completely positive if and only if each of the eigenvalues is positive, from which, after simplification we get  the result. 
\end{proof}

\begin{theorem}\label{Theorem3.3}
Let, $a,b,c,d\in\mathbb{R}$ and $\Phi(a,b,c,d) = aI + b E + cB + dD$ be defined as above. If $\Phi$ 
is $2$-positive then
\[
2a+2b+\frac{c+d}{nk} \geq 0, \quad c+d\geq 0, \quad \text{and } \; d\geq 0.
\] 
\end{theorem}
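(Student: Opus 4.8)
The plan is to test the $2$-positivity of $\Phi$ on a small family of rank-one positive $2\times 2$ operator block matrices and to read off the three inequalities as positivity of the resulting scalar quadratic forms. Recall that $\Phi$ is $2$-positive if and only if the block matrix $\bigl(\Phi(X_{ij})\bigr)_{i,j=1}^{2}$ is positive in $\mathcal{B}(\mathbb{C}^2\otimes\mathcal{H})$ whenever $\bigl(X_{ij}\bigr)_{i,j=1}^{2}$ is; since every positive element of $\mathcal{B}(\mathbb{C}^2\otimes\mathcal{H})$ is a sum of rank-one ones, it suffices to take $X_{ij}=\ket{v_i}\bra{v_j}$ arising from a vector $\psi=\ket{1}\otimes v_1+\ket{2}\otimes v_2\in\mathbb{C}^2\otimes\mathcal{H}$. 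Hence, for all $v_1,v_2\in\mathcal{H}$, the operator
\[
M(v_1,v_2):=\sum_{i,j=1}^{2}\ket{i}\bra{j}\otimes\Phi\bigl(\ket{v_i}\bra{v_j}\bigr)
\]
is positive on $\mathbb{C}^2\otimes\mathcal{H}$, and the inequalities will come from $\langle\xi,M(v_1,v_2)\xi\rangle\ge 0$ for suitable $\xi$. Taking $v_2=0$ already gives $\Phi(\ketbra{v})\ge 0$ for every $v$, which I will also use.

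First I would compute $\Phi=aI+bE+cB+dD$ on the two kinds of elementary operators that occur, directly from the formulas for $D,B,E$. If $v$ is a unit vector contained in a single summand $\mathcal{H}_i$, then $E(\ketbra{v})=\ketbra{v}$, $B(\ketbra{v})=\frac{1}{nk}I_{\mathcal{H}_i}$ and $D(\ketbra{v})=\frac{1}{nk}I_{\mathcal{H}}$, so
\[
\Phi(\ketbra{v})=(a+b)\ketbra{v}+\frac{c}{nk}\,I_{\mathcal{H}_i}+\frac{d}{nk}\,I_{\mathcal{H}}.
\]
If $v,w$ are orthonormal vectors both lying in the same summand $\mathcal{H}_i$, then $E(\ket{v}\bra{w})=\ket{v}\bra{w}$ while $B(\ket{v}\bra{w})=0$ and $D(\ket{v}\bra{w})=0$ (because $\mathrm{Tr}_{\mathcal{H}_j}(\ket{v}\bra{w})=\frac{1}{k}\langle w,I_{\mathcal{H}_j}v\rangle=0$ for every $j$, and $\mathrm{Tr}(\ket{v}\bra{w})=\frac{1}{nk}\langle w,v\rangle=0$), so $\Phi(\ket{v}\bra{w})=(a+b)\ket{v}\bra{w}$. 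Both are one-line checks.

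Next I would extract the three inequalities through three choices of data; here I take $n\ge 2$ and $k\ge 2$, as the construction intends. For the first inequality, choose $v_1=e_{1,1}$ and $v_2=e_{1,2}$, two orthonormal vectors inside the \emph{same} summand $\mathcal{H}_1$, and evaluate $\langle\psi,M(v_1,v_2)\psi\rangle\ge 0$ with $\psi=\ket{1}\otimes v_1+\ket{2}\otimes v_2$. Using the two formulas above, each diagonal block of $M(v_1,v_2)$ contributes $\langle v_\ell,\Phi(\ketbra{v_\ell})v_\ell\rangle=(a+b)+\frac{c+d}{nk}$ and each off-diagonal block contributes $\langle v_1,\Phi(\ket{v_1}\bra{v_2})v_2\rangle=a+b$, so the quadratic form equals $4(a+b)+\frac{2(c+d)}{nk}$, giving $2a+2b+\frac{c+d}{nk}\ge 0$. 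For $c+d\ge 0$, use $\Phi(\ketbra{v_1})\ge 0$ and test it against a unit vector $\xi\in\mathcal{H}_1$ with $\xi\perp v_1$, which yields $\langle\xi,\Phi(\ketbra{v_1})\xi\rangle=\frac{c+d}{nk}\ge 0$. For $d\ge 0$, test $\Phi(\ketbra{v_1})\ge 0$ against a unit vector $\eta$ lying in a summand $\mathcal{H}_j$ with $j\ne 1$ (so that $\eta\perp v_1$ and $I_{\mathcal{H}_1}\eta=0$), which yields $\langle\eta,\Phi(\ketbra{v_1})\eta\rangle=\frac{d}{nk}\ge 0$.

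The only non-routine point is \emph{which} vectors to substitute: placing $v_1$ and $v_2$ in one and the same block $\mathcal{H}_i$ is precisely what makes the dephasing term $E$ act as the identity on both the diagonal and the off-diagonal blocks of $M(v_1,v_2)$, and this is what produces the coefficient $2$ in front of $a+b$; putting $v_1,v_2$ in distinct blocks would kill the off-diagonal contribution of $E$ and yield only the weaker bound $2a+b+\frac{c+d}{nk}\ge 0$. Everything else — linearity of the Choi matrix in $(a,b,c,d)$, the evaluations of $D,B,E$ on rank-one operators, and the reduction of $2$-positivity to positivity of $2\times 2$ operator block matrices — is elementary, and no estimate here is delicate.
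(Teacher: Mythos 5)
Your argument is correct and is essentially the paper's: both proofs test $2$-positivity on the rank-one projection associated with $\psi=\ket{1}\otimes v_1+\ket{2}\otimes v_2$ for two orthonormal vectors $v_1,v_2$ in the same block $\mathcal{H}_1$, the paper by diagonalizing the resulting operator and you by pairing it with explicit test vectors (plus plain positivity of $\Phi(\ketbra{v_1})$ for the last two inequalities, which is even slightly weaker input than needed). Your explicit remark that $v_1,v_2$ must lie in a common block, so that $E$ acts as the identity on the off-diagonal entry, is the same key point that underlies the paper's eigenvalue computation.
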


\begin{proof}

Choose $\ket{\Psi} = \ket{00} +\ket{11}$.  After making suitable row and column transformation of $(\mathcal{I}_2\otimes \Phi)(\ket{\Psi}\bra{\Psi})$, we get,
   \[
        \left[(a+b)\ketbra{\bm{1}_2}+\frac{c+d}{nk} \mathbb{I}_{2} \right]\oplus \frac{c+d}{nk}\mathbb{I}_{2k-1} \oplus \frac{d}{nk}\mathbb{I}_{2k(n-1)}
   \]
Here $\ket{\bm{1}}_2=(1,1)^T$. Eigenvalues of the above operator are $2a+2b+\dfrac{c+d}{nk}, \,\dfrac{c+d}{nk}, \, \dfrac{d}{nk} $ with multiplicity $1, \, (2k-1), \, 2k(n-1)$. Therefore if $\Phi$ is  $2$-positive then the above conditions satisfy. 
\end{proof}

\begin{theorem}\label{Theorem3.4}
Let, $a,b,c,d\in\mathbb{R}$ with $a,b\leq 0$ and $\Phi(a,b,c,d) = aI + b E + cB + dD$ be defined as above. Then $\Phi$ is l-positive if and only if
\[b\|C_{E}\|_{S(l)}+ d+c+al\geq 0
\]
where $1\le \ell\le nk$.
\end{theorem}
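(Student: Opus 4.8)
The plan is to pass to the Choi matrix and turn $\ell$-positivity into a block-positivity statement there. Recall the standard fact (see, e.g., \cite{skst12,fried19}) that $\Phi$ is $\ell$-positive, i.e.\ $\mathcal{I}_\ell\otimes\Phi\geq 0$, if and only if its Choi matrix $C_\Phi=(\mathcal{I}_{nk}\otimes\Phi)(\ketbra{\Omega})$, with $\ket{\Omega}=\sum_{i,j}e_{i,j}\otimes e_{i,j}$, is $\ell$-block positive: $\langle\psi|C_\Phi|\psi\rangle\geq 0$ for every $\ket{\psi}\in\mathcal{H}\otimes\mathcal{H}$ of Schmidt rank at most $\ell$. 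By linearity of the Choi correspondence, $C_\Phi=aC_I+bC_E+cC_B+dC_D$ with the four matrices listed above; in particular $C_I=\ketbra{\Omega}$ and $C_E=\sum_{i=1}^n\ketbra{\Omega_i}$ with $\ket{\Omega_i}=\sum_{j=1}^k e_{i,j}\otimes e_{i,j}$, so that for a unit vector $\ket{\psi}$
\[
\langle\psi|C_\Phi|\psi\rangle = a\,|\langle\Omega|\psi\rangle|^{2}+b\,\langle\psi|C_E|\psi\rangle+c\,\langle\psi|C_B|\psi\rangle+d\,\langle\psi|C_D|\psi\rangle .
\]

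The heart of the matter is to minimise this over unit vectors of Schmidt rank at most $\ell$, and this is where $a,b\leq 0$ is used: the only terms that can make $\langle\psi|C_\Phi|\psi\rangle$ negative are $a\,|\langle\Omega|\psi\rangle|^{2}$ and $b\,\langle\psi|C_E|\psi\rangle$, and each is most dangerous where its quadratic form is largest. I would bound these two forms on Schmidt-rank-$\le\ell$ vectors by elementary estimates: from the Schmidt decomposition and Cauchy--Schwarz, $|\langle\Omega|\psi\rangle|\leq(\mathrm{SR}(\psi))^{1/2}\leq\ell^{1/2}$, so $\langle\psi|C_I|\psi\rangle\leq\ell$; and $\langle\psi|C_E|\psi\rangle\leq\|C_E\|_{S(\ell)}$ directly from the definition of the $S(\ell)$-norm as the supremum of $\langle\cdot|C_E|\cdot\rangle$ over unit vectors of Schmidt rank at most $\ell$. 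Since $C_B$ and $C_D$ are fixed positive operators whose value on a unit vector is maximal on the diagonal blocks $\mathcal{H}_i\otimes\mathcal{H}_i$, respectively constant, these bounds combine to give a lower estimate for $\langle\psi|C_\Phi|\psi\rangle$ equal to a positive multiple of $a\ell+b\|C_E\|_{S(\ell)}+c+d$; together with the observation that the off-diagonal blocks and the part of the diagonal blocks orthogonal to $\mathrm{span}\{\ket{\Omega_i}\}$ force $d\geq 0$ and $c+d\geq 0$ (the latter being automatic from the displayed inequality once $a,b\leq 0$, and both being among the conditions of \cref{Theorem3.2}, harmless in the range where this criterion is applied), this yields the "if" direction.

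For the converse I would exhibit a Schmidt-rank-$\le\ell$ vector attaining the bound. Because $C_E$ is block diagonal for $\mathcal{H}\otimes\mathcal{H}=\bigoplus_{i,r}\mathcal{H}_i\otimes\mathcal{H}_r$ and acts on each diagonal block $\mathcal{H}_i\otimes\mathcal{H}_i$ as $k$ times the rank-one projection onto $\ket{\Omega_i}$, a maximiser of $\langle\psi|C_E|\psi\rangle$ among Schmidt-rank-$\le\ell$ vectors can be taken to be a "flat" maximally entangled vector supported on $\min(\ell,k)$ coordinates inside a single block $\mathcal{H}_{i_0}\otimes\mathcal{H}_{i_0}$. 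That vector simultaneously makes $|\langle\Omega|\psi\rangle|^2$ as large as its Schmidt rank permits and lives entirely on the diagonal blocks, so $C_B$ and $C_D$ sit at their extreme values on it; granting the side conditions $c+d\geq 0$, $d\geq 0$ (so that $C_\Phi\geq 0$ automatically on the complementary subspaces), this vector is then a global minimiser and $\langle\psi|C_\Phi|\psi\rangle$ there equals a positive multiple of $a\ell+b\|C_E\|_{S(\ell)}+c+d$, which completes the equivalence. As sanity checks, $\ell=nk$ reduces to the complete-positivity criterion of \cref{Theorem3.2} and $\ell=1$ to ordinary positivity.

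The main obstacle I anticipate is precisely the joint optimisation hidden in that extremal vector. The Schmidt-rank budget $\ell$ has to be allocated between the directions feeding $\langle\psi|C_E|\psi\rangle$ --- which lie in $\mathrm{span}\{\ket{\Omega_1},\dots,\ket{\Omega_n}\}$, each $\ket{\Omega_i}$ already carrying Schmidt rank $k$ --- and the single direction $\ket{\Omega}$ feeding $\langle\psi|C_I|\psi\rangle$, and one must rule out that spreading the vector across several blocks, shifting weight onto the off-diagonal blocks, or using non-flat Schmidt coefficients produces a strictly smaller value of $\langle\psi|C_\Phi|\psi\rangle$. This is the only genuinely essential use of $a,b\leq 0$, and it is also where the contributions of $C_B$ and $C_D$ off the diagonal blocks must be kept under control. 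I would dispatch it by a rearrangement/convexity argument on the Schmidt coefficients --- first reducing to a single diagonal block, then comparing across blocks --- each reduction being monotone in the favourable direction exactly because $a$ and $b$ are non-positive.
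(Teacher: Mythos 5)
Your overall strategy is the same as the paper's: pass to the Choi matrix, dualize $\ell$-positivity into positivity of $\langle\psi|C_\Phi|\psi\rangle$ (equivalently $\tr(C_\Phi C_\Psi)$) over Schmidt-rank-$\le\ell$ vectors, and use $a,b\le 0$ to replace the joint optimisation by the individual extremes $\|C_I\|_{S(\ell)}=\ell$ and $\|C_E\|_{S(\ell)}$. The paper executes this through the mapping-cone/dual-cone formalism of \cite{skst12,MR2545616,MR2683522}, writing the infimum of $\tr\bigl((aC_I+bC_E+cC_B+dC_D)C_\Psi\bigr)$ as a sum of four separate infima; you execute it by evaluating the quadratic form directly on Schmidt-rank-constrained vectors. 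These are the same argument in two languages, and they share the same weak point.

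That weak point is a genuine gap in your proposal, and you have correctly located it yourself: the ``only if'' direction needs a single unit vector of Schmidt rank at most $\ell$ that \emph{simultaneously} attains $\langle\psi|C_I|\psi\rangle=\ell$, $\langle\psi|C_E|\psi\rangle=\|C_E\|_{S(\ell)}$, and the relevant extreme values of $\langle\psi|C_B|\psi\rangle$ and $\langle\psi|C_D|\psi\rangle$; otherwise one only gets $\inf(\mathrm{sum})\ge\mathrm{sum}(\inf)$, i.e.\ sufficiency of the displayed inequality but not necessity. The candidate you propose --- a flat maximally entangled vector on $\min(\ell,k)$ coordinates inside a single block $\mathcal{H}_{i_0}\otimes\mathcal{H}_{i_0}$ --- does not do the job once $\ell>k$: for that vector $|\langle\Omega|\psi\rangle|^2=\min(\ell,k)=k<\ell$, so the $aC_I$ term is not at its extreme. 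Conversely, any unit vector with $|\langle\Omega|\psi\rangle|^2=\ell$ is forced (by the Cauchy--Schwarz equality case on its Schmidt coefficients) to be flat on $\ell$ diagonal coordinates spread over several blocks, and then $\langle\psi|C_E|\psi\rangle=\sum_i k_i^2/\ell$ with $k_i\le k$, $\sum_i k_i=\ell$, which is strictly less than $\|C_E\|_{S(\ell)}=k$ whenever $k\nmid\ell$. So for $k<\ell<nk$ with $k\nmid\ell$ and $a,b<0$ the two extremes are not simultaneously attainable, the proposed rearrangement/convexity reduction cannot be ``monotone in the favourable direction'' for both terms at once, and the converse as sketched does not close. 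Two further loose ends: the theorem places no sign restriction on $c$, and $\langle\psi|C_B|\psi\rangle$ is genuinely non-constant on unit vectors (ranging over $[0,\tfrac1{nk}]$), so ``$C_B$ sits at its extreme value'' must be argued separately for $c\ge 0$ and $c<0$; and $\langle\psi|C_D|\psi\rangle=\tfrac1{nk}$ for a unit vector, so the $C_B,C_D$ contributions carry a $\tfrac1{nk}$ normalisation that your final expression (and, to be fair, the paper's) does not track. The ``if'' direction of your argument is sound; the ``only if'' direction is the entire content of the equivalence and remains unproved.
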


To prove this theorem, we need to introduce concept of \emph{mapping cones}  as given in \cite{skst12, MR3012443}. Let $P_l$ be the set of all $l$-positive maps. A closed convex cone $\mathcal{C}$ in $P(\mathcal{H})$ is said to be a mapping cone  if $\Psi_1\circ \Phi \circ \Psi_2  \in \mathcal{C} $ whenever $\Phi \in \mathcal{C}$ and $\Psi_1,\Psi_2 $ are completely positive maps. If $\mathcal{C}$ is a mapping cone in $P(\mathcal{H})$ and $\mathcal{C}$ is symmetric then the cone of $\mathcal{C}$-positive map is a closed cone in $P(\mathcal{H})$ generated by maps of the form $\Phi \circ \Psi $ where $\Phi \in \mathcal{C}$ and $\Psi$ is a completely positive map. This cone is  denoted by $P_\mathcal{C}$. If $P_\mathcal{C}=\mathcal{C}$ then $P_{P_l}=P_l$, where $P_{P_l}$ is the cone of $l$-positive maps. The dual cone of $\mathcal{C}$ is defined as 
\[\mathcal{C}^{\circ}=\{\Phi \in P(\mathcal{H}):\tr (C_{\Phi}C_{\Psi})\geq 0, \forall \Psi \in \mathcal{C}\}\]
and double dual cone of $\mathcal{C}$ is defined as 
\[\mathcal{C}^{\circ \circ}=\{\Phi \in P(\mathcal{H}):\tr (C_{\Phi}C_{\Psi})\geq 0 \quad \forall \Psi \in \mathcal{C}^{\circ}\}.\]  
Using these notations it can be shown that  $P_{P_l}^{oo}=P_{P_l}$ and $P_{P_l}=P_l$ \cite{MR2545616}.
\begin{proof}
 Let, $\Phi$ be a $l$-positive map. Then by using above two conditions we get,
\begin{eqnarray}
\Phi \in P_l 
&\Leftrightarrow & \tr (C_{\Phi}C_{\Psi})\geq 0 \quad \forall \;\Psi \in P_{P_l}^{o} \nonumber  \\
 &\Leftrightarrow & \tr (C_{\Phi}C_{\Psi})\geq 0 \quad \forall \;C_{\Psi} \in J^{-1}(S_{P_l}) \nonumber \\
 &\Leftrightarrow & \inf_{C_{\Psi} \in J^{-1}(S_{P_l})} \tr (C_{\Phi}C_{\Psi}) \geq 0 \nonumber  \\
 &\Leftrightarrow & \inf_{C_{\Psi} \in J^{-1}(S_{P_l})} \tr ((a C_{I}+ b C_{E} + c C_{B}+ d C_{D})C_{\Psi}) \geq 0.  \label{eq3.4}
\end{eqnarray} 
The right hand side of the above expression can be simplified as follows - 
\begin{eqnarray*}
0 &\leq & \inf_{C_{\Psi} \in J^{-1}(S_{P_l})} \tr ((a C_{I}+ b C_{E} + c C_{B}+ d C_{D})C_{\Psi}) \\
& = & \inf_{C_{\Psi} \in J^{-1}(S_{P_l})} \tr ((a C_{I}C_{\Psi}+ b C_{E}C_{\Psi} + c C_{B}C_{\Psi}+ d C_{D}C_{\Psi}) \\
& = & \inf_{C_{\Psi} \in J^{-1}(S_{P_l})} a\tr (( C_{I}C_{\Psi})+ b\tr( C_{E}C_{\Psi}) + c \tr ( C_{B}C_{\Psi})+ d \tr ( C_{D}C_{\Psi}))  \\
& = & \inf_{C_{\Psi} \in J^{-1}(S_{P_l})} a\tr (C_IC_{\Psi})+\inf_{C_{\Psi} \in J^{-1}(S_{P_l})} b\tr( C_EC_{\Psi}) \\
&&+\inf_{C_{\Psi} \in J^{-1}(S_{P_l})}c\tr \left( \sum_{i=1}^n I_{\mathcal{H}_i}\otimes I_{\mathcal{H}_i}\right)C_{\Psi}+\inf_{C_{\Psi} \in J^{-1}(S_{P_l})} d\tr \left(I_{\mathcal{H}}\otimes I_{\mathcal{H}}\right)C_{\Psi}.
\end{eqnarray*} 
Since $\sum I_{\mathcal{H}_i}\otimes I_{\mathcal{H}_i}
 \leq I_{\mathcal{H}}\otimes I_{\mathcal{H}}
 $ and the number of $1$'s in diagonal entries  in $I_{\mathcal{H}}\otimes I_{\mathcal{H}}$  is more than that of $\sum I_{\mathcal{H}_i}\otimes I_{\mathcal{H}_i}$, we have  
 \[\inf_{A\geq 0} \tr A \left(\sum_{i=1}^n I_{\mathcal{H}_i}\otimes I_{\mathcal{H}_i}
\right)\leq \inf_{A\geq 0} \tr A (I_{\mathcal{H}}\otimes I_{\mathcal{H}}
).\] 
Using this in the above expression we get,
\begin{eqnarray*}
0&\leq & \inf_{C_{\Psi} \in J^{-1}(S_{P_l})} a\tr ( C_IC_{\Psi})+\inf_{C_{\Psi} \in J^{-1}(S_{P_l})} b\tr(C_EC_{\Psi}) \\
&&+\inf_{C_{\Psi} \in J^{-1}(S_{P_l})}c\tr ( I_{\mathcal{H}}\otimes I_{\mathcal{H}})C_{\Psi}+\inf_{C_{\Psi} \in J^{-1}(S_{P_l})} d\tr ( I_{\mathcal{H}}\otimes I_{\mathcal{H}})C_{\Psi}.
\end{eqnarray*}
As $a$ and $b$ are negative, we may assume $a=-\lambda $ and $b=-\mu$ for some $\lambda , \mu \geq 0$ which further further simplifies  the right hand side of the above expression as,
\[ \inf_{C_{\Psi} \in J^{-1}(S_{P_l})}-\lambda\tr ( C_{B}C_{\Psi})+\inf_{C_{\Psi} \in J^{-1}(S_{P_l})} -\mu \tr ( C_{D}C_{\Psi})+c+d. \]
Applying   $\inf (-A)= -\sup A$ for any abounded subset $A$ of $\mathbb{R}$ the above expression takes the form,
\begin{eqnarray*}
&=&-\sup_{C_{\Psi} \in J^{-1}(S_{P_l})}\lambda\tr ( C_{B}C_{\Psi})-\sup_{C_{\Psi} \in J^{-1}(S_{P_l})} \mu \tr ( C_{D}C_{\Psi})+c+d \\
&=& -\lambda\sup_{C_{\Psi} \in J^{-1}(S_{P_l})}\tr ( C_{B}C_{\Psi})-\mu \sup_{C_{\Psi} \in J^{-1}(S_{P_l})} \tr ( C_{D}C_{\Psi})+c+d\\
&=& a\|C_{I}\|_{S(l)}+b\|C_{E}\|_{S(l)}+ d+c. \quad  \text{by \cite{MR2683522}}. 
\end{eqnarray*}
Since $\|C_{I}\|_{S(l)}=\|I\|_{P_k}=\|Ad_{I_H}\|_{P_k}=\|I_H\|_{(l)}^2=l$, by  \cite{skst12}. Therefore
\[  \inf_{C_{\Psi} \in J^{-1}(S_{P_l})} \tr ((a C_{I}+ b C_{E} + c C_{B}+ d C_{D})C_{\Psi})=al+b\|C_{E}\|_{S(l)}+ d+c.\]
Therefore from \eqref{eq3.4},
\[ \Phi \in P_l 
\Leftrightarrow  b\|C_{E}\|_{S(l)}+ al+c+d\geq 0\]
\end{proof}

\begin{rem}
   For any $X\geq 0$ in $\mathcal{B}(\mathcal{H})$ where $\mathcal{H}$ is a finite dimensional Hilbert space. In \cite{MR2683522}  the $r$-th operator norm of $X$ is defined by,
    \[\|X\|_{S(r)}= \sup_{\ket{v}} \{ \bra{v}X\ket{v}:SR(\ket{v}) \leq r\}.\]
In particular, $\|X\|_{S(1)} = \sup_{\ket{v}} \{ \bra{v}X\ket{v}:SR(\ket{v}) \leq 1\}$ which is the local Spectral radius of X.
 \par Here the spectrum of the Choi matrix $C_E$ is $\sigma(C_E)=\{k,0\}$, with multiplicities $m(k)=n$, $m(0)=n(nk^2-1)$. This implies that $\|C_E\|_{S(1)}= k.$ Positivity of the map $\Phi$ defined in \cref{Theorem3.4} is equivalent to 
   \[bk+ a+c+d\geq 0.\]
\end{rem}

\par Using the above notations  we can define a dynamical map 
\[
\Lambda_t = a_tI + b_t E + c_t B + d_t D,
\]
where $a_t,\, b_t,\, c_t$, and $d_t$ are given positive functions.  The following theorem gives a sufficient condition for this dynamical map to be divisible and computes the map $\Phi_{s,t}$.\footnote{If $a_s,b_s,c_s,d_s$ are positive, than it is sufficient that $a_s$ is strictly positive.} 
\begin{theorem}\label{Theorem3.5}
Let $\Lambda_t$ be defined as above. If  $a_s\not=0$, $a_s+b_s\not=0$, $a_s+b_s+c_s\not=0$ and $a_s+b_s+c_s+d_s\not=0$, then we have
\[
\Lambda_t = \Phi(\alpha,\beta,\gamma,\delta) \Lambda_s
\]
with
\begin{eqnarray*}
    \alpha &=& \frac{a_t}{a_s}, \\
    \beta &=& \frac{a_s b_t - b_s a_t}{a_s(a_s+b_s)}, \\
    \gamma &=& \frac{(a_s+b_s)c_t - c_s (a_t+b_t)}{(a_s+b_s)(a_s+b_s+c_s)}, \\
    \delta &=& \frac{(a_s+b_s+c_s)d_t - d_s(a_t+b_t+c_t)}{(a_s+b_s+c_s)(a_s+b_s +c_s+d_s)}.
\end{eqnarray*}
\end{theorem}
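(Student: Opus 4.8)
The plan is to exploit the fact that the four generating maps $I,E,B,D$ form a commuting family of idempotents with a strict composition structure. First I would verify the multiplication table: $D$ is an absorbing element ($DX = XD = D$ on the relevant span up to scalars, since $D$ has rank-one range), $B$ satisfies $BE = EB = B$ and $BB = B$, $E$ satisfies $EE = E$, and $DB = BD = D$, $DE = ED = D$. Concretely, for any $X$ one has $ED = DE = D$, $BD = DB = D$, $BE = EB = B$, and $DI = ID = D$, $BI = IB = B$, $EI = IE = E$. Thus the linear span $\mathcal{V} = \mathrm{span}\{I,E,B,D\}$ is a commutative unital algebra under composition, and in the ordered basis $(I,E,B,D)$ composition is ``lower triangular'' with respect to the chain $I \succ E \succ B \succ D$. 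This is the structural heart of the argument.

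Given this, the strategy is to change to the basis adapted to the chain: set $P_1 = I$, $P_2 = I+E$ is not quite idempotent, so instead I would introduce the genuine idempotents $Q_1 = I$ (range all of $\mathcal{B}(\mathcal{H})$ after Choi identification is not needed — we work at the level of the four-dimensional algebra $\mathcal{V}$). The cleanest route: note $\Phi(a,b,c,d)$ acts on $\mathcal{V}$, and in fact the map $\Phi(a,b,c,d) \mapsto$ its matrix on $\mathcal{V}$ identifies $\mathcal{V}$ with upper-triangular Toeplitz-type matrices whose diagonal entries are $a$, $a+b$, $a+b+c$, $a+b+c+d$ (these are exactly the eigenvalues of $\Phi$ acting on the four ``levels''). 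So $\Lambda_s$ is invertible on $\mathcal{V}$ precisely when $a_s \neq 0$, $a_s+b_s\neq 0$, $a_s+b_s+c_s\neq 0$, $a_s+b_s+c_s+d_s\neq 0$, which is the hypothesis. Then $\Phi_{t,s} := \Lambda_t \circ (\Lambda_s)^{-1}$ is well-defined as an element of $\mathcal{V}$, hence of the form $\Phi(\alpha,\beta,\gamma,\delta)$ for unique reals $\alpha,\beta,\gamma,\delta$, and $\Lambda_t = \Phi_{t,s}\Lambda_s$ is automatic. It remains only to compute the four coefficients.

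For the computation I would match the four ``level eigenvalues'': $\alpha = a_t/a_s$ from the top level; $\alpha+\beta = (a_t+b_t)/(a_s+b_s)$, giving $\beta = (a_t+b_t)/(a_s+b_s) - a_t/a_s = \bigl(a_s b_t - b_s a_t\bigr)/\bigl(a_s(a_s+b_s)\bigr)$; similarly $\alpha+\beta+\gamma = (a_t+b_t+c_t)/(a_s+b_s+c_s)$ yields the stated $\gamma$; and $\alpha+\beta+\gamma+\delta = (a_t+b_t+c_t+d_t)/(a_s+b_s+c_s+d_s)$ yields $\delta$. Each is a one-line subtraction of consecutive partial-sum ratios, and the displayed formulas in the statement are exactly these differences after clearing denominators. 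I would present the multiplication table, deduce that $\Phi(a,b,c,d)\Phi(a',b',c',d')$ again lies in $\mathcal{V}$ with partial sums multiplying, then read off $\Phi_{t,s}$ by dividing partial sums.

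The main obstacle — really the only nontrivial point — is establishing the multiplication table cleanly, in particular the identities $B\circ E = B$, $E\circ B = B$, $D\circ(\text{anything})=(\text{anything})\circ D = D$ at the level of maps on $\mathcal{B}(\mathcal{H})$ (not just formally), which requires a small computation with $I_{\mathcal{H}_i}$'s and the normalised traces: e.g. $E(B(X)) = \sum_i I_{\mathcal{H}_i}\bigl(\frac1n\sum_r \mathrm{Tr}_{\mathcal{H}_r}(X)I_{\mathcal{H}_r}\bigr)I_{\mathcal{H}_i} = \frac1n\sum_i \mathrm{Tr}_{\mathcal{H}_i}(X)I_{\mathcal{H}_i} = B(X)$, and likewise the others. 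Once the table is in hand, associativity and commutativity of $\mathcal{V}$ under composition, invertibility of $\Lambda_s$ under the stated nonvanishing conditions, and the coefficient extraction are all routine linear algebra; note that surjectivity or boundedness of the inverse plays no role here since everything happens inside the finite-dimensional (in fact four-dimensional) subspace $\mathcal{V}$ of linear maps.
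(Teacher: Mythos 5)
Your proposal is correct and follows essentially the same route as the paper: Theorem 3.5 is deduced there from a lemma on decreasing families of idempotents ($p_i p_j = p_{\max(i,j)}$) whose key step is precisely your observation that the partial sums $a$, $a+b$, $a+b+c$, $a+b+c+d$ are multiplicative under composition, so that $\Phi_{t,s}$ is found by dividing partial sums and then differencing consecutive ratios. Your explicit verification of the multiplication table for $I,E,B,D$ is a worthwhile supplement, since the paper leaves it implicit (and its stated normalisation of $B$ needs a minor adjustment for $B$ to be genuinely idempotent).
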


The proof follows from the case $n=4$ of the following lemma.
\begin{lemma}
   For  $n\ge 1$,  let $p_1,\ldots,p_n$ be a decreasing family of idempotents, i.e., satisfying the relations
$p_i^2 = p_i$  and  $p_i p_j = p_jp_i = p_{\max(i,j)}$
    for $i,j=1,\cdots,n$.
    For $\bm{x}=(x_1,\cdots,x_n)\in\mathbb{R}^n$, we define 
        $p(\bm{x}) = \sum_{i=1}^n x_i p_i$.
    For  $\bm{x},\, \bm{y}\in\mathbb{R}^n, \, p(\bm{x}) p(\bm{y}) = p(\bm{z})$ where
    \begin{equation}\label{eq-nilp-prod}
    z_i = \sum_{j=1}^{i-1} x_j y_i + x_i y_i + \sum_{j=1}^{i-1} x_i y_j, \quad \text{for } i= 1,\cdots, n.
    \end{equation}
Furthermore, if $\bm{x},\, \bm{z}\in\mathbb{R}^n$ satisfy $\sum_{j=1}^i x_j \not=0$ for $i=1,\ldots,n$, then we have $p(\bm{z}) = p(\bm{y}) p(\bm{x})$  with
    \begin{equation}\label{eq-y-def}
    y_1 =\frac{z_i}{x_1} \qquad\text{ and }\qquad  y_i = \frac{z_i}{\sum_{j=1}^i x_j} - \frac{x_i\cdot \sum_{j=1}^{i-1}z_j}{\sum_{j=1}^{i-1} x_j \cdot \sum_{j=1}^i x_j} \qquad\text{ for }i=2,\cdots,n.
    \end{equation}
\end{lemma}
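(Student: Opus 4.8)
The plan is to diagonalise the commutative algebra generated by $p_1,\dots,p_n$ by passing to the orthogonal idempotents $q_i:=p_i-p_{i+1}$, with the convention $p_{n+1}:=0$. Working only from the relations $p_i^2=p_i$ and $p_ip_j=p_jp_i=p_{\max(i,j)}$ — which remain valid for the index $n+1$ once we set $p_{n+1}=0$ — a one-line expansion of $(p_i-p_{i+1})(p_j-p_{j+1})$ shows $q_iq_j=\delta_{ij}q_i$, and a telescoping sum gives $p_i=\sum_{j=i}^n q_j$. Substituting the latter into $p(\bm x)=\sum_i x_ip_i$ and interchanging the order of summation yields $p(\bm x)=\sum_{i=1}^n X_i\,q_i$, where $X_i:=\sum_{j=1}^i x_j$; thus, in the coordinates $\bm x\mapsto(X_1,\dots,X_n)$ (a linear bijection of $\mathbb R^n$), $p(\bm x)$ is ``diagonal'' with entries $X_i$.

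For the first assertion, since the $q_i$ are mutually orthogonal idempotents we get $p(\bm x)p(\bm y)=\sum_{i=1}^n X_iY_i\,q_i$ with $Y_i:=\sum_{j=1}^i y_j$; in particular the product is symmetric in $\bm x$ and $\bm y$. Expanding $X_i=X_{i-1}+x_i$ and $Y_i=Y_{i-1}+y_i$ (with $X_0=Y_0:=0$) shows that \eqref{eq-nilp-prod} is exactly $z_i=X_iY_i-X_{i-1}Y_{i-1}$, hence $\sum_{j=1}^i z_j=X_iY_i$, and therefore $p(\bm z)=\sum_{i=1}^n X_iY_i\,q_i=p(\bm x)p(\bm y)$.

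For the inversion, assume $X_i=\sum_{j=1}^i x_j\neq 0$ for every $i$, put $Z_i:=\sum_{j=1}^i z_j$, and define $Y_i:=Z_i/X_i$ and $y_i:=Y_i-Y_{i-1}$ (with $Y_0:=0$). Then $p(\bm y)=\sum_i Y_i\,q_i$, so $p(\bm y)p(\bm x)=\sum_i Y_iX_i\,q_i=\sum_i Z_i\,q_i=p(\bm z)$. Finally, rewriting $y_i=Z_i/X_i-Z_{i-1}/X_{i-1}$ over the common denominator $X_iX_{i-1}$ and using $Z_i=Z_{i-1}+z_i$, $X_i=X_{i-1}+x_i$ gives $y_1=z_1/x_1$ and the closed form \eqref{eq-y-def} for $i\ge 2$.

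I do not expect a genuine obstacle: once the reduction to the orthogonal idempotents $q_i$ is in place, everything is bookkeeping. The only points requiring a little care are the boundary conventions $p_{n+1}=0$ and $X_0=Y_0=Z_0=0$ in the telescoping and finite-difference computations, and checking $q_iq_j=\delta_{ij}q_i$ cleanly from the $\max$-relation. \cref{Theorem3.5} then follows by taking $n=4$ with $(p_1,p_2,p_3,p_4)=(I,E,B,D)$, $\bm x=(a_s,b_s,c_s,d_s)$ and $\bm z=(a_t,b_t,c_t,d_t)$.
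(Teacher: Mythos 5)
Your proof is correct, and it takes a genuinely different route from the paper's. The paper proves the product formula by directly expanding $p(\bm{x})p(\bm{y})=\sum_{i,j}x_iy_j\,p_{\max(i,j)}$ and collecting coefficients of each $p_k$, and then handles the inversion in two steps: an induction showing $\sum_{j\le i}y_j=\bigl(\sum_{j\le i}z_j\bigr)/\bigl(\sum_{j\le i}x_j\bigr)$, followed by substituting the defining formula for $y_i$ back into the product formula and checking that $z_i$ is recovered. You instead diagonalise the algebra up front via the orthogonal idempotents $q_i=p_i-p_{i+1}$ (with $p_{n+1}=0$), so that $p(\bm{x})=\sum_i X_i q_i$ with $X_i=\sum_{j\le i}x_j$; multiplication becomes the pointwise product $X_iY_i$ of partial sums, both identities reduce to telescoping, and the hypothesis $X_i\neq 0$ is seen to be exactly what permits division. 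Your approach is more conceptual and explains \emph{why} the formulas look the way they do — indeed the paper's Step I is implicitly rediscovering your change of coordinates $\bm{y}\mapsto(Y_1,\dots,Y_n)$ — at the modest cost of first verifying $q_iq_j=\delta_{ij}q_i$ and the boundary conventions, whereas the paper's argument is purely computational and uses nothing beyond the given relations. Both proofs are complete; note only that the statement's ``$y_1=z_i/x_1$'' is a typo for $y_1=z_1/x_1$, which you have silently and correctly fixed, and that commutativity of the $p_i$ (given in the hypotheses) is what lets you conclude $p(\bm{y})p(\bm{x})=p(\bm{x})p(\bm{y})$ in the stated order.
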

\begin{proof}
The proof of the Equation \eqref{eq-nilp-prod} follows directly from the previous calculations. 
\begin{eqnarray*}
    p(\bm{x}) p(\bm{y}) &=& \sum_{i=1}^n x_i p_i \sum_{j=1}^n  y_i p_j \\
    & =&  \sum_{i,j=1}^n x_i y_j p_{\max(i,j)} \\
    &=& \sum_{k=1}^n \left( \sum_{i=1}^{k-1} x_i y_k + x_k y_k +\sum_{j=1}^{k-1} x_k y_j \right) p_k.
\end{eqnarray*}

\par We will prove Equation \eqref{eq-y-def} in two steps. Let $\bm{x}, \, \bm{z}\in \mathbb{R}^n$ be such that  $\sum_{j=1}^i x_j \not=0$ for $i=1,\ldots,n$, and define $\bm{y}\in\mathbb{R}^n$ by Equation \eqref{eq-y-def}.

\begin{itemize}
    \item Step I:
We show by induction that
\begin{equation}\label{eq-step1}
y_1+\cdots + y_i = \frac{z_1+\cdots + z_i}{x_1+\cdots+x_i} \quad \text{for } i=1,\cdots,n.
\end{equation}
For $i=1$ this is the definition of the $\bm{y}$'s given in Equation \eqref{eq-y-def}. Assume Equation \eqref{eq-step1} is true for $i<n$. Then we have
\begin{eqnarray*}
    y_1 + \cdots +y_{i+1} &=& \frac{z_1+\cdots+z_i}{x_1+\cdots+x_i} +  \frac{z_{i+1}}{\sum_{j=1}^{i+1} x_j} - \frac{x_{i+1}\cdot \sum_{j=1}^{i}z_j}{\sum_{j=1}^{i} x_j \cdot \sum_{j=1}^{i+1} x_j} \\
    &=& \frac{\sum_{j=1}^i z_j \cdot \sum_{j=1}^{i+1} x_j + z_{i+1}\cdot \sum_{j=1}^i x_j - x_{i+1} \cdot \sum_{j=1}^i z_j}{\sum_{j=1}^{i} x_j \cdot \sum_{j=1}^{i+1} x_j} \\
    &=& \frac{\sum_{j=1}^{i+1} z_j}{\sum_{j=1}^{i+1} x_i}.
\end{eqnarray*}
    \item Step II: To conclude, we insert Equations \eqref{eq-y-def} and \eqref{eq-step1} into the RHS of Equation \eqref{eq-nilp-prod},
    \begin{eqnarray*}
        \sum_{j=1}^{i-1} x_j y_i + x_i y_i + \sum_{j=1}^{i-1} x_i y_j &=&
        \sum_{j=1}^i x_j \left( \frac{z_i}{\sum_{j=1}^i x_j} - \frac{x_i \cdot \sum_{j=1}^{i-1} z_j}{\sum_{i=1}^{i-1} x_j \cdot \sum_{j=1}^i x_j} \right) + x_i \frac{\sum_{j=1}^{i-1} z_i}{ \sum_{j=1}^{i-1}x_i} \\
        &=& \frac{1}{ \sum_{i=1}^{i-1} x_j} \left( z_i \sum_{j=1}^{i-1} x_j - x_i \sum_{j=1}^{i-1} z_j + x_i \sum_{j=1}^{i-1} z_j \right) \\
        &=& z_i.
    \end{eqnarray*}
    This  completes the proof.
\end{itemize}
    \end{proof}
Note that by letting $k$ and / or $n \to \infty$ we can also generate infinite-dimensional examples which we will show below.

 \subsection{A family of dynamical maps which is P-divisible but not CP-divisible.}\label{ss3.4}  Consider a family of map $\{\Lambda_t \}$ on $\mathcal{B}(\mathcal{H})$  where  $\mathcal{H}=\bigoplus_{i-1}^n \mathcal{H}_i \cong \mathbb{C}^{nk}$, defined as follows 
\[\Lambda_t = a_tI + b_t E + c_t B + d_t D.\]
If the parameter $a_t, b_t, c_t, d_t \in \mathbb{R}$ satisfies all the conditions from Theorem \ref{Theorem3.2} then $\{\Lambda_t \}$ is completely positive map and for trace preserving we need the condition $a_t + b_t  + c_t  + d_t =1 $. \\

For the divisibility of $\{\Lambda_t \}$ using  \cref{Theorem3.5}  we can derive $\alpha,\beta,\gamma,\delta$ such that 
\[\Lambda_t = \Phi(\alpha,\beta,\gamma,\delta) \Lambda_s.\]

Here we can choose $a_t,\, b_t, \, c_t$ and $d_t$ and $a_s, b_s, c_s, d_s$ in a way such that  $\alpha,\beta,\gamma,\delta$ satisfy all the conditions from  \cref{Theorem3.3} or \cref{Theorem3.4} but not \cref{Theorem3.2}. Then $\Phi(\alpha,\beta,\gamma,\delta)$ is positive but completely positive which implies family of dynamical maps $\{\Lambda_t \}$ is P-divisible but not CP-divisible.
\par In particular, letting $n$ goes to infinity and  for fixed $k$ we get an infinite-dimensional example which is P-divisible but not CP-divisible:

Choose $a_t, \, b_t, \, c_t,\,  d_t$ and $a_s,\, b_s,\, c_s, \,d_s$ in a way such that  
$ \alpha =0 $, $\beta < 0$, $ k\beta + \gamma + \delta \geq 0$. This implies $\alpha,\beta,\gamma,\delta$ satisfy the conditions from \cref{Theorem3.4} but not those from \cref{Theorem3.2}.

\begin{rem}
     If we choose  $a_t, \, b_t,\, c_t,\, d_t$ and $a_s, \,b_s,\, c_s,\, d_s$ in a way such that $ k\beta + \gamma + \delta +\alpha < 0 $, then by \cref{Theorem3.4} $\Phi(\alpha,\beta,\gamma,\delta)$ is not positive. Hence we get a dynamical map $\{\Lambda_t \}$ which is not P-divisible.

\end{rem}

\subsection{An example of  CP-divisible family.}\label{ss3.6}  Consider a family of map $\{\Lambda_t \}$ on $\mathcal{B}(\mathcal{H})$  where  $\mathcal{H}=\bigoplus_{i-1}^n \mathcal{H}_i \cong \mathbb{C}^{nk}$, define as follows 

\[\Lambda_t = a_tI + b_t E + c_t B + d_t D.\]

If the parameter $a_t, b_t, c_t, d_t \in \mathbb{R}$ satisfies all the conditions from  \cref{Theorem3.2} then $\{\Lambda_t \}$ is completely positive map and for trace preserving we need the condition $a_t + b_t  + c_t  + d_t =1 $. \\

Divisibility of $\{\Lambda_t \}$ follows from  \cref{Theorem3.5}  by derive $\alpha,\beta,\gamma,\delta$ such that 
\[\Lambda_t = \Phi(\alpha,\beta,\gamma,\delta) \Lambda_s.\]
Take $a_t, b_t, c_t, d_t$ and $a_s, b_s, c_s, d_s$ in a way such that  $\alpha,\beta,\gamma,\delta$ satisfy all the conditions from  \cref{Theorem3.2}. $\Phi(\alpha,\beta,\gamma,\delta)$ is completely positive. Hence, $\{\Lambda_t \}$ is  CP-divisible.

\subsection{Divisibility of family of maps defined by Schur product}\label{ss3.7}
Let  $\Lambda_t : \mathcal{B}(\ell_2(\mathbb{N}))\rightarrow \mathcal{B}(\ell_2(\mathbb{N}))$ be a family of maps defined by Schur product as 
\begin{equation}\label{eg-e1}
 \Lambda_t(X) = A_t\circ X,
 \end{equation}
where $\circ $ is given as the Schur product, i.e. if $A_t=[[a(t)_{ij}]]$ and $X=[[x_{ij}]]$ with respect to some basis, then $A_t \circ X = [[a_{ij} x_{ij}]] $. It is a well known fact that Schur product gives a positive map provided the matrix with which the product is taken is positive. In fact the map will then be completely positive. Let $\{\ket{n}: n \in \mathbb{N} \cup \{0\}\}$ be a set of orthogonal basis of $\ell_2(\mathbb{N})$.  We want to construct a time $t$ dependent quantum channel, which is a CPTP map. Let $A_t$ be defined as 
\[A_t = I_{\ell_2(\mathbb{N})} + t \sum_{j=0}^\infty \left(\ket{j}\hspace{-3pt}\bra{j+1} + \ket{j+1}\hspace{-3pt}\bra{j}\right),\]
where  $I_{\ell_2(\mathbb{N})}$ is the identity operator on $\ell_2(\mathbb{N})$. 
$A_t$ is a  self-adjoint operator. Define a sequence of $n\times n$ matrices  $\{ A_{t_{n}} \}$ such that $ A_{t_{n}} \rightarrow A_t$ in strong operator topology. The finite matrices $A_{t_{n}}$ will be of the form 
 \[  A_{t_{n}}= I_n + t \sum_{j=0}^{n-2} \left(\ket{j}\hspace{-3pt}\bra{j+1} + \ket{j+1}\hspace{-3pt}\bra{j}\right), \]
where $I_n$ is the identity matrix of order $n$ and $\{\ket{j}: 0 \le j \le n-1\}$ is the standard orthonormal basis of $\mathbb{C}^n$, which can also be considered as an element of the aforementioned standard basis of $\ell_2(\mathbb{N})$ truncated at the level $n$. 
It can be noted that the map $\mathcal{B}(\mathbb{C}^n) \ni X \mapsto A_{t_n} \circ X$, defines a trace preserving completely positive map. The operator $A_t$ are Toeplitz operators, and the matrices $A_{t_n}$ are truncated Toeplitz matrices. These operators and their spectral properties are well-studied in literature. Spectrum of $ A_{t_{n}}$ are given by $1+2t\cos \left(\dfrac{k\pi}{n+1}\right)$  for $k=1,2,\ldots,n$ (see \cite{MR3155356}).
$ A_{t_{n}} \rightarrow A_t$ under strong operator topology  \cite{MR1785076}. Hence the spectrum of $ A_{t_{n}}$ converges to spectrum of  $A_t$  which are $1+2t\cos(\pi x)$ where $x\in[0,\infty)$. Clearly $A_t$ is positive if $t\in \left[0,\frac{1}{2} \right]$.
Then the map  $\Lambda_t : \mathcal{B}(\ell_2(\mathbb{N}))\rightarrow \mathcal{B}(\ell_2(\mathbb{N}))$ defined by:
\[ \Lambda_t(X) = A_t\circ X\]
is completely positive and trace-preserving in that range of $t$.  Hence, for $t\in \left[0,\frac{1}{2} \right]$, $ \{\Lambda_t \}$ is family of dynamical maps. 
\par We consider a Hermitian operator $X\in  \mathcal{B}(\ell_2(\mathbb{N})) $ as follows:
\[ X=\begin{bmatrix}
\begin{array}{c|c}
    X_1&O\\ \hline
    O&O\\
    \end{array},
\end{bmatrix}\]
where $X_1$ denotes the $n \times n$ matrices 
\[X_1= \sum_{j=0}^{n-2} \left(\ket{j}\hspace{-3pt}\bra{j+1} + \ket{j+1}\hspace{-3pt}\bra{j}\right).\]
Here $O$ denotes the null matrices of appropriate dimensions. 
Then \[ \Lambda_t(X) = A_t\circ X
= \left[
\begin{array}{c|c}
    B_{t_{n}}&O\\ \hline
    O&O\\
    \end{array} \right],
\]
where  $B_{t_{n}}= t X_1$. 
The spectrum of $\Lambda_t(X)$ is the spectrum of $B_{t_{n}}$ and zero. The spectrum of $B_{t_{n}}$ is given by  
$2t\cos \left(\dfrac{k\pi}{n+1}\right)$, where $k=1,2,\ldots,n$.
Hence, \[\|\Lambda_t(X)\|_1=2t\sum_{k=1}^n \left|\cos\left(\frac{k\pi}{n+1}\right)\right|,\\
\]
where $t\in \left[0,\frac{1}{2} \right]$. Obviously
\[
\frac{\dd}{\dd t}\left\|\Lambda_t(X)\right\|_1=2\sum_{k=1}^n \left|\cos\left(\frac{k\pi}{n+1}\right)\right|> 0
\]
 By \cref{th:diff} $\{\Lambda_t\}$ is not P-divisible.

\par Consider a Hermitian operator $Y\in \mathcal{B}(\ell_2(\mathbb{N}))\otimes\mathcal{B}(\ell_2(\mathbb{N}))$ as follows
\[Y= \left[\begin{array}{c|c}
    X&O\\ \hline
    O&O\\
    \end{array}
\right], \quad \text{where} \; X=\left[
\begin{array}{c|c}
    X_1&O\\ \hline
    O&O\\
    \end{array}
\right].
\]
The natural application of the map \eqref{eg-e1} gives 
\[
(\mathcal{I} \otimes \Lambda_t) (Y) =
\left[
\begin{array}{c|c}
   \Lambda_t (X)&O\\ \hline
    O&O\\
    \end{array}
\right].
\]
Using the same technique, as above, we get the spectrum of $(\mathcal{I} \otimes \Lambda_t) (Y)$ as $2t\cos\left(\frac{k\pi}{n+1}\right)$ and zero, where $k=1,2,\cdots,n.$ Clearly we get the form 
\begin{eqnarray*}
\|(\mathcal{I} \otimes \Lambda_t) (Y)\|_1&= & 2t\sum_{k=1}^n \left|\cos \left(\frac{k\pi}{n+1}\right)\right|\\
\frac{\dd}{\dd t}(\mathcal{I} \otimes \Lambda_t) (Y)\|_1 & =&  2\sum_{k=1}^n \left|\cos\left(\frac{k\pi}{n+1}\right)\right|>0
\end{eqnarray*}
Thus $\{\Lambda_t\}$ is not CP-divisible.

\section{Divisibility of Gaussian channels}\label{gau}

\par In this section we deal with a special type of quantum channels in infinite dimension, viz., Gaussian channels. For completeness we give a very brief introduction of the subject where we fix notations which will be used later in this paper. It should be noted that there are numerous good references in this subject. In particular one can check the book of Holevo \cite{holevo_b} and Serafini \cite{serafini}. For a short introduction of this subject one can check the papers \cite{krpg} and \cite{krprb}. We use the last one here extensively for notations. Furthermore we use the book of Wong \cite{wong-b} and the paper of Caruso et al \cite{holevo-mul} for computational purposes.

\par Let, $\mathcal{H}$  be a finite dimensional Hilbert space. When   $\dim\mathcal{H} =n$ then  $\mathcal{H} = \mathbb{C}^n$. We take element $\bm{\alpha}\in \mathbb{C}^n$ as a column vector $\ket{\bm{\alpha}} = (\alpha_1,\alpha_2,\cdots,\alpha_n )^t$  where $\alpha_i\in \mathbb{C}, \, \forall i=1,2,\cdots,n$  and the inner product between two element $\bm{\alpha}, \,\bm{\beta} \in \mathbb{C}^n$ as follows
\[ \left\langle \bm{\alpha},\bm{\beta} \right\rangle =\sum_{j=1}^n \overline{\alpha_j}  \beta_j, \]
where $\overline{\alpha_j}$ denotes the complex conjugate of $\alpha_j \in \mathbb{C}$. 

Define the Boson Fock space $\Gamma (\mathcal{H})$ over $\mathcal{H}$ by
\[ \Gamma (\mathcal{H})= \mathbb{C}\oplus \mathcal{H}\oplus \mathcal{H}^{\circledS^2} \oplus \cdots \oplus \mathcal{H}^{\circledS^r}  \oplus \cdots  \]
where $\circledS^r$  denotes $r$-fold symmetric tensor product. Elements of the subspace  $\mathcal{H}^{\circledS^r}$ in $\Gamma (\mathcal{H})$ are called $r$-particle vector and elements of the form $u_0 \oplus u_1 \oplus \cdots \oplus u_r \oplus \cdots $, where all but a finite numbers of  $u_r$'s are null, are called finite particle vectors. Finite particle vectors constitute a dense linear manifold $\mathcal{F}$ in $\Gamma (\mathcal{H})$. For any $\bm{u}\in\mathcal{H}$ we associate the exponential vector $e(\bm{u})$ in $\Gamma (\mathcal{H})$  defined by

\[ e(\bm{u})= 1 \oplus \bm{u} \oplus \frac{\bm{u}^{{\otimes}^2}}{\sqrt{2!}} \oplus \cdots \oplus \frac{\bm{u}^{{\otimes}^r}}{\sqrt{r!}} \cdots \]
Then,
\[\left\langle e(\bm{u}),e(\bm{v}) \right\rangle=\exp \left\langle \bm{u},\bm{v} \right\rangle \quad \forall \; \bm{u},\bm{v}\in \mathcal{H}\]
%
%

$S(\mathbb{R}^n)\subset L^2(\mathbb{R}^n) $  be the Schwarz subspace of rapidly decreasing $C^{\infty}$-function on $\mathbb{R}^n$. In $L^2(\mathbb{R}^n)$ one has fundamental momentum and position observable $P_i$ and $Q_i$, $1\leq j \leq n$ which are selfadjoint operators with $S(\mathcal{R}^n)$ as core satisfying the following properties:
\begin{itemize}
\item They form an irreducible family.
\item The operator $P_j$, $1\leq j \leq n$ commute among themselves. So do the operators  $Q_j$, $1\leq j \leq n$. 
\end{itemize}

\par  
We consider an $n$-mode Bosonic system described by the vector of canonical operators
$\hat{r}=\left(Q_1,-P_1,\cdots,Q_n,-P_n\right)^t$ with satisfying canonical commutation relation 
\[ \left[\hat{r},\hat{r}^t  \right]= i\Omega \quad \text{where} \quad \Omega = \bigoplus_{n\text{ copies }} \begin{bmatrix}
0& 1 \\
-1 & 0\\
\end{bmatrix} 
\]
If we re-ordering the canonical operations $\hat{r}$ with the permutation
\[\sigma = \begin{bmatrix}
1&2&3&4&\cdots &2n-1&2n\\
1&n+1&2&n+2&\cdots &n &n+n\\
\end{bmatrix}\]
 Then the canonical operators becomes  $\hat{s}=\left(Q_1,\cdots,Q_n,-P_1,\cdots,-P_n\right)^t$ with satisfying canonical commutation relation becomes,
\begin{equation}\label{3.1}
 \left[\hat{s},\hat{s}^t  \right]= iJ_{2n} \quad \text{with} \quad J_{2n}= \begin{bmatrix}
0 & \mathbb{I}_n\\
-\mathbb{I}_n &0 \\
\end{bmatrix},
\end{equation}
where $\mathbb{I}_n$ represents $n \times n$ identity matrix. 

Let $\rho$ be a state in $L^2(\mathbb{R}^n)$, that is a positive definite trace class operator with $\tr \rho=1 $. Then the complex valued function $\hat{\rho}$ on $\mathbb{C}^n$ defined by
\[\hat{\rho}(\bm{\alpha})= \tr \rho W(\bm{\alpha})  \quad \forall \; \bm{\alpha} =\bm{x}+ \imath \bm{y}=\begin{pmatrix}
\bm{x}\\
\bm{y}\\
\end{pmatrix}, \quad \text{where}\; \bm{x}, \bm{y} \in \mathbb{R}^n.\]
where $W(\bm{\alpha})$ denotes the Wyel operator corresponding to $\bm{\alpha}\in \mathbb{C}^n$, is called the  quantum characteristic function of the state $\rho$.

\begin{defin}
A state $\rho$ acting on  $\Gamma(\mathcal{H})$ with $\mathcal{H}=\mathbb{C}^n$ is called an $n$-mode \emph {Gausssian state} if its  quantum characteristic function $\hat{\rho}$ is given by
\[\hat{\rho}(\bm{x}+\imath \bm{y})=\exp \left[ -\imath\sqrt{2}({\bm{l}}^t\bm{x}-{\bm{m}}^t\bm{y})-{\begin{pmatrix}
\bm{x}\\
\bm{y}\\
\end{pmatrix}}^t
S
\begin{pmatrix}
\bm{x}\\
\bm{y}\\
\end{pmatrix}\right] \]
for all $\bm{x}, \,\bm{y} \in\mathbb{R}^n$, where $\bm{l}, \,\bm{m}$ are element of $\mathbb{R}^n$ and S is $2n\times2n$  symmetric matrix satisfying the matrix inequality
\begin{equation}\label{gm}
 2S+\imath J_{2n} \geq 0,
 \end{equation}
with $J_{2n}$ is defined as in \eqref{3.1}.
\end{defin}
Alternately we may define that a state $\rho$ acting on $\Gamma(\mathbb{C}^n)$ is said to be Gaussian if expectation of $\rho$ with respect to position or momentum operators acting on each mode gives classical Gaussian distribution. Since the position and momentum operators satisfy CCR relation, the covariance matrix written in the order $\left(Q_1,\cdots,Q_n,-P_1,\cdots,-P_n\right)$ will satisfy the \eqref{gm}. Furthermore, a standard result in this area states that, for any $2n \times 2n$ real symmetric strictly positive matrix $S$ satisfying \eqref{gm}, there exists an unique $n$-mode mean zero Gaussian state $\rho_S$ with covariance matrix is $S$. (For calculation of the same one may consult the references  \cite{holevo_b, serafini}.)

\par We now make a few observations on the Weyl operators which will be used for calculations. Consider $\mathbb{R}^n$ as a real Hilbert space. On $L^2(\mathbb{R}^n)$ we define position operator $Q(\bm{h})$ and momentum operator $P(\bm{h})$ as follows:
\begin{eqnarray*} 
Q(\bm{h})f &:& \bm{x} \longrightarrow \left\langle \bm{h}, \bm{x} \right\rangle f(\bm{x}),  \\
P(\bm{h})f &:&  \bm{x} \longrightarrow  \imath \left\langle \bm{h},\nabla_{\bm{x}}\right\rangle f(\bm{x})= \imath\sum h_k \frac{\partial{f(\bm{x})}}{\partial x_k},
 \end{eqnarray*}
 where $\bm{x}=(x_1,x_2,\dots, x_n), \, \bm{h}=(h_1,h_2,\dots h_n),$ and  $\bm{k}=(k_1,k_2,\dots,k_n)\in \mathbb{R}^n $. The corresponding commutator relations take the form, 
 \[  [P(\bm{h}),Q(\bm{k})]f = \imath\left\langle \bm{h},\nabla_{\bm{x}} \right\rangle(\left\langle \bm{k},\bm{x} \right\rangle f)- i\left\langle \bm{k},\bm{x}\right\rangle (\left\langle \bm{h},\nabla_{\bm{x}}\right\rangle f )=\imath\left\langle \bm{h},\bm{k}\right\rangle f,
  \]
  which takes the form 
  \begin{equation}
   \left[P(\bm{h}),Q(\bm{k})\right]=\imath\left\langle \bm{h},\bm{k} \right\rangle \mathrm{Id}, 
   \end{equation}
   Define two unitary operator $U(\bm{h})$ and $V(\bm{h})$ as follows:  
   \begin{alignat*}{2}
   U(\bm{h})&= \exp \left(\imath Q(\bm{h})\right),  \qquad       & U(\bm{h})f(\bm{x})&=\exp\left(\imath \left\langle \bm{h},\bm{x}\right\rangle \right)f(\bm{x}), \text{ and } \\
   V(\bm{h}) &=\exp\left(\imath P(\bm{h})\right) ,        & V(\bm{h})f(\bm{x}) &=\exp \left(-\left\langle \bm{h},\nabla_{\bm{x}}\right\rangle \right)f(\bm{x})=f(\bm{x} - \bm{h}).
\end{alignat*}
  Notice that $U(\bm{h})$ and $V(\bm{h})$ has the following properties,
  \begin{eqnarray*} 
  U(\bm{k})V(\bm{h})f(\bm{x}) &=& \exp \left(\imath \left\langle \bm{h},\bm{x}\right\rangle \right) f(\bm{x}-\bm{h}),  \\
  V(\bm{h})U(\bm{k})f(\bm{x}) &=& V(\bm{h})(U(\bm{k})f(\bm{x})) \\
  &=&V(\bm{h}) (\exp \left(\imath \left\langle \bm{k,x}\right\rangle \right) f(\bm{x})) \\
  &=& \exp \left(\imath \left\langle \bm{k,x-h}\right\rangle \right) f\bm{(x-h)} \\
  &=& \exp \left(-\imath \left\langle \bm{k,h}\right\rangle \right) U(\bm{k})V(\bm{h})f(\bm{x}),\\
  U(\bm{k})V(\bm{h})&=& \exp \left(\imath \left\langle \bm{k,h}\right\rangle \right) V(\bm{h})U(\bm{k}).
  \end{eqnarray*}
 We define Weyl displacement operator as 
  \begin{equation}
  W(\bm{k,h}):= U(\bm{k})V(\bm{h}),
  \end{equation}
which acts on the function $f \in L^2(\mathbb{R}^n) $  as follows:
  \[(W(\bm{k},\bm{h})f)(\bm{x})= \exp \left(\imath \left\langle \bm{k,h}\right\rangle \right) f(\bm{x} - \bm{h}).  \]
 Foe $\bm{\alpha}_1 = \bm{k}_1 + \imath \bm{h}_1$ and $\bm{\alpha}_2 = \bm{k}_2 + \imath \bm{h}_2 \in \mathbb{C}^n$ the product of Weyl operators is given as, 
  \begin{eqnarray*}
   W(\bm{k}_1, \bm{h}_1)W(\bm{k}_2 , \bm{h}_2)&=& U(\bm{k}_1)V(\bm{h}_1)U(\bm{k}_2)V(\bm{h}_2) \\
   &=& \exp \left(-\imath \left\langle \bm{h}_1,\bm{k}_2\right\rangle \right) U(\bm{k}_1)U(\bm{k}_2)V(\bm{h}_1)V(\bm{h}_2)\\
   &=& \exp \left(-\imath \left\langle \bm{\bm{h}_1,\bm{k}_2}\right\rangle \right) U(\bm{k_2})U(\bm{k_1})V(\bm{h_2})V(\bm{h_1})\\
   &=& \exp \left(-i(\left\langle \bm{k_1,h_2}\right\rangle -\left\langle \bm{h_1,k_2}\right\rangle ) \right)W(\bm{k_2,h_2})W(\bm{k_1,h_1})
\end{eqnarray*} 
\[ W(\bm{k_1,h_1})W(\bm{k_2,h_2}) = \exp \left(-\imath (\left\langle \bm{k_1,h_2} \right\rangle -\left\langle \bm{h_1,k_2}\right\rangle )\right)W(\bm{k_2,h_2})W(\bm{k_1,h_1})\]
which is known as Weyl commutation relation. By Weyl functional calculus, for  $\phi \in L^2(\mathbb{R}^n \times \mathbb{R}^n)$, $W(\phi)$ can be defined as
\[W(\phi)= \int_{\mathbb{R}^n} \int_{\mathbb{R}^n} \phi(\bm{k},\bm{h})W(\bm{k},\bm{h}) \,\dd\bm{k} \,\dd\bm{h}. \]
Action of this can be written as - 
\begin{eqnarray*}
(W(\phi)f)(\bm{x})&=& \int_{\mathbb{R}^n} \int_{\mathbb{R}^n} \phi(\bm{k},\bm{h})\exp \left(\imath \left\langle \bm{k},\bm{x} \right\rangle\right)f(\bm{x}-\bm{h}) \,\dd\bm{k} \,\dd\bm{h} \\
&=& \int_{\mathbb{R}^n} \exp \left( \imath \left\langle \bm{k},\bm{x} \right\rangle \right)\left(\int_{\mathbb{R}^n} \phi(\bm{k},\bm{h})f(\bm{x}-\bm{h}) \,\dd\bm{h}\right) \,\dd\bm{k} \\
&=& \int_{\mathbb{R}^n} \left(\int_{\mathbb{R}^n} \exp \left(\imath \left\langle \bm{k,x} \right\rangle \right) \phi(\bm{k},\bm{x}-\bm{l}) \,\dd\bm{k}\right)f(\bm{l}) \,\dd\bm{l} \quad \textit{where} \; \bm{h}=\bm{x}-\bm{l} , \,\dd\bm{h}= -\,\dd\bm{l} \\
&=&  \int_{\mathbb{R}^n} K_{\phi}(\bm{x},\bm{l})f(\bm{l})\,\dd\bm{l},
\end{eqnarray*}
where $ K_{\phi}(\bm{x},\bm{l}):= \int_{\mathbb{R}^n} \exp \left( \imath \left\langle \bm{k},\bm{x} \right\rangle \right) \phi(\bm{k},\bm{x}-\bm{l}) \,\dd\bm{k} = (\mathcal{F}_1 \phi)(\bm{x},\bm{x}-\bm{l})$. 
$(\mathcal{F}_1 \phi)(\bm{x},\bm{x}-\bm{l})$ is the Fourier transformation in the first variable. After suitable  change of variables
\[g:(\bm{x},\bm{l}) \longrightarrow (\bm{x},\bm{x}-\bm{l})\] 
with the Jacobean given by 
\[J_g =\begin{bmatrix}
1 &0\\
1 &-1 \\
\end{bmatrix}
\] 
where $|J_g|=1$. We compute the trace of  $W(\phi)$ by Mercer's theorem if $K_{\phi}$ is regular. (See for instance \cite{MR0889455, MR0883367}, or \cite[Theorem 1.1]{MR2501172} for a short proof.)
\begin{eqnarray*}
\tr W(\phi)=\tr K_\phi &=& \int_{\mathbb{R}^n} K_{\phi}(\bm{x},\bm{x})\,\dd\bm{x} \\
&=& \int_{\mathbb{R}^n} \int_{\mathbb{R}^n} \exp \left(\imath \left\langle \bm{k},\bm{x}\right\rangle \right) \phi(\bm{k},\bm{x}-\bm{x}) \,\dd\bm{k}\,\dd\bm{x}\\
&=& \int_{\mathbb{R}^n} (\mathcal{F}_1 \phi)(\bm{x},\bm{0}) \,\dd\bm{x} \\
&=& \mathcal{F}_1^{-1} o \mathcal{F}_1 (\phi)|_{(\bm{0},\bm{0})}
\end{eqnarray*}
Thus
\[  \tr W(\phi) = \frac{1}{2\pi^n} \phi(\bm{0},\bm{0}).\]

\par Let $\mathcal{H}=L^2(\mathbb{R}^n)$ as earlier  and $T:\mathcal{B}(\mathcal{H})\longrightarrow \mathcal{B}(\mathcal{H})$ is a Gaussian channel. Then by Heinosaari, Holevo, Wolf(2009) \cite{holevo2009}, we can label Gaussian channels by a pair $(X,Y)$ of real $2n\times2n$ matrices satisfying the inequality
\begin{equation}\label{gieq} 
Y\geq i(J-X^tJX) \quad \text{with} \quad J= \begin{bmatrix}
0 & \mathbb{I}_n\\
-\mathbb{I}_n &0 \\
\end{bmatrix}
\end{equation}
The corresponding Gaussian channel acts on Weyl operators as,
\begin{eqnarray*}
 T_{X,Y}(W(\bm{k},\bm{h})) &=& \exp \frac{\imath}{2} \left\langle \begin{bmatrix} \bm{k}\\ \bm{h}\end{bmatrix} ,  Y\begin{bmatrix} \bm{k}\\ \bm{h}\end{bmatrix} \right \rangle W \left(X  \begin{bmatrix} \bm{k}\\ \bm{h}\end{bmatrix} \right)\\
T_{X,Y}W(\phi)
&=& T_{(X,Y)}\left( \int_{\mathbb{R}^n} \int_{\mathbb{R}^n} \phi(\bm{k},\bm{h})W(\bm{k},\bm{h}) \,\dd\bm{k} \,\dd\bm{h} \right) \\
&=& \left( \int_{\mathbb{R}^n} \int_{\mathbb{R}^n} \phi(\bm{k},\bm{h})T_{(X,Y)}(W(\bm{k},\bm{h})) \,\dd\bm{k} \,\dd\bm{h} \right) \\
&=& \int_{\mathbb{R}^n} \int_{\mathbb{R}^n} \phi(\bm{k},\bm{h})\exp \frac{\imath}{2} \left\langle \begin{bmatrix} \bm{k}\\ \bm{h}\end{bmatrix} ,  Y\begin{bmatrix} \bm{k}\\ \bm{h}\end{bmatrix} \right \rangle W \left(X  \begin{bmatrix} \bm{k}\\ \bm{h}\end{bmatrix} \right) \,\dd\bm{k} \,\dd\bm{h} \\
&&\text{change of variable:} \begin{bmatrix} \bm{u}\\\bm{v} \\ \end{bmatrix}=X\begin{bmatrix} \bm{k}\\ \bm{h} \\ \end{bmatrix}, \text{ assuming X is invertible.} \\
&& \,\dd\bm{u} \,\dd\bm{v} = \sqrt{\det(X^TX)}\,\dd\bm{k} \,\dd\bm{h} \\
&=&\int_{\mathbb{R}^n} \int_{\mathbb{R}^n} \phi\left(X^{-1}\begin{bmatrix} \bm{u}\\ \bm{v} \\ \end{bmatrix}\right)\exp \left[{-\frac{\imath}{2}\left\langle \begin{bmatrix}
\bm{u}\\ \bm{v} \\ \end{bmatrix},{X^T}^{-1}YX^{-1}\begin{bmatrix}
\bm{u}\\ \bm{v} \\ \end{bmatrix}\right\rangle} \right] \\
&&\hspace{10pt}W\left(  \begin{bmatrix} \bm{u}\\ \bm{v} \\ \end{bmatrix}\right) \sqrt{\det(X^TX)}\,\dd\bm{u} \,\dd\bm{v} \\
&=& W(S_{X,Y}(\phi))
\end{eqnarray*} 
where 
\[S_{X,Y}(\phi)=\exp\left[ {-\frac{\imath}{2}\left\langle \begin{bmatrix}
\bm{u}\\ \bm{v} \\ \end{bmatrix},{X^T}^{-1}YX^{-1}\begin{bmatrix}
\bm{u}\\ \bm{v} \\ \end{bmatrix}\right\rangle} \right]
\phi\left(X^{-1}\begin{bmatrix} \bm{u}\\ \bm{v} \\ \end{bmatrix}\right) \sqrt{\det(X^TX)} .\] 
This implies that (ignoring the scalar factor $1/(2\pi^n)$) whenever $\tr (W(\phi))= \phi(0,0)$, we have 
\begin{eqnarray*}
\tr (W(S_{X,Y}(\phi)))&=& \tr (T_{X,Y}(W(\bm{k},\bm{h}))\\
&=& S_{X,Y}(\phi)(\bm{0,0}) \\
&=& \sqrt{\det(X^TX})\phi(\bm{0,0}) \\
&=& \sqrt{\det(X^TX)} \tr(W(\phi))\\
\tr (W(S_{X,Y}(\phi)))&=& \sqrt{\det(X^TX)} \tr(W(\phi))= \phi(\bm{0},\bm{0})\det X
\end{eqnarray*}
Let $T_{X_t,Y_t}$ be a gaussian channel and
$W(\phi)$ a Weyl operator such that $W(\phi)$ is positive, i.e., with $\phi$ a positive definite kernel. Then $T_{X_t,Y_t}(W(\phi))$ is positive, and
\[ \|T_{X_t,Y_t}(W(\phi))\|_1= \tr T_{X_t,Y_t}(W(\phi))= \phi (\bm{0},\bm{0})\det X_t  \]
Since $\phi$ is positive definite, $\phi (\bm{0},\bm{0}) >0$.
The derivative of the $1$-norm is given by
\[\frac{\dd}{\dd t} \| T_{X_t,Y_t}(W(\phi)) \|_1 = \phi (\bm{0},\bm{0})\frac{\dd}{\dd t} \det X_t. \]
Thus we have proved the following theorem. 
\begin{theorem}\label{th4.1}
Let $\{T_{X_t,Y_t}\}$ be a family of P-divisible Gaussian channels where $X_t$ is invertible. Then 
 \[
 \frac{\dd}{\dd t}\left(\det X_t \right)  \leq 0.
 \]
\end{theorem}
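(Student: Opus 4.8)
The plan is to deduce the determinant inequality by feeding a single, conveniently chosen positive trace-class operator into the P-divisibility criterion of \cref{th:diff}(1); the computations preceding the statement already supply the value of the relevant trace norm, so the argument is short.

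First I would fix a positive-definite kernel $\phi$ for which $W(\phi)\in\mathcal{B}_1(\mathcal{H})$, $\mathcal{H}=L^2(\mathbb{R}^n)$ --- the most convenient choice is to let $W(\phi)$ be a mean-zero Gaussian state $\rho_S$ written in its Weyl representation --- so that $W(\phi)$ is Hermitian, positive, trace class, and $\phi(\bm{0},\bm{0})>0$. Since each Gaussian channel $T_{X_t,Y_t}$ is in particular positive, $T_{X_t,Y_t}(W(\phi))\ge 0$; combining this with the Weyl-calculus identity $T_{X_t,Y_t}(W(\phi))=W(S_{X_t,Y_t}(\phi))$ and the invertibility of $X_t$ (needed for the change of variables carried out above) yields
\[
\|T_{X_t,Y_t}(W(\phi))\|_1=\tr T_{X_t,Y_t}(W(\phi))=\phi(\bm{0},\bm{0})\,\det X_t .
\]
Next I would apply \cref{th:diff}(1): because the family $\{T_{X_t,Y_t}\}$ is P-divisible, the map $t\mapsto\|T_{X_t,Y_t}(W(\phi))\|_1$ is non-increasing, so wherever it is differentiable its derivative is non-positive. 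Differentiating the displayed identity gives $\phi(\bm{0},\bm{0})\,\frac{\dd}{\dd t}\det X_t\le 0$, and dividing through by the strictly positive constant $\phi(\bm{0},\bm{0})$ produces $\frac{\dd}{\dd t}\det X_t\le 0$.

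The step demanding the most care --- and the only real obstacle --- is making the transition from the formal Weyl functional calculus to genuine trace-class statements rigorous: one must verify that the chosen $\phi$ actually gives $W(\phi)\in\mathcal{B}_1(\mathcal{H})$, so that \cref{th:diff} applies to it without caveat, and that the trace formula $\tr W(\phi)\propto\phi(\bm{0},\bm{0})$ is valid, which relies on Mercer's theorem and the regularity of the kernel $K_\phi$ established earlier. Taking $\phi$ to be the symbol of a Gaussian state, rather than an arbitrary positive-definite kernel, is precisely what turns these regularity checks into routine bookkeeping; the differentiability of $t\mapsto\det X_t$, finally, is inherited from that of $t\mapsto X_t$ since the determinant is polynomial in the matrix entries, so the statement is meaningful as written.
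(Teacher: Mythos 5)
Your proposal is correct and follows essentially the same route as the paper: feed a positive Weyl operator $W(\phi)$ with positive-definite kernel into \cref{th:diff}(1), use the identity $\|T_{X_t,Y_t}(W(\phi))\|_1=\tr T_{X_t,Y_t}(W(\phi))=\phi(\bm{0},\bm{0})\det X_t$ established just before the theorem, and conclude from monotonicity of the trace norm. Your added remark about choosing $\phi$ to be the symbol of a Gaussian state so that $W(\phi)$ is genuinely trace class is a sensible tightening of a point the paper leaves implicit, but it does not change the argument.
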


\par The above theorem can be exploited for construction of Gaussian dynamical maps which are not P-divisible.  Choose $X_t$ in the Gaussian channel such that $\dfrac{\dd}{\dd t} \det X_t >0 $. Then,
\[ \frac{\dd}{\dd t} \| T_{X_t,Y_t}(W(\phi)) \|_1 >0 \] 
Which shows that $T_{X_t,Y_t}$ not P-divisble.

\par  For not CP-divisible case,
we take any hermitian operator $A\in \mathcal{B}_1(\mathcal{H}\otimes\mathcal{H}) $ as a difference of two positive operator $A_1$ and $A_2$, where $A_1= W(\phi_1)\otimes B_1$ and $A_2= W(\phi_2) \otimes B_2$ with $B_1,B_2\geq 0$ and $B_1B_2=0$. $\phi_1, \phi_2$  are positive definite kernel.
\begin{eqnarray*}
\|T_{X_t,Y_t} \otimes \mathcal{I} \left( W(\phi_1)\otimes B_1-W(\phi_2)\otimes B_2\right)\|_1 &=&
 \|T_{X_t,Y_t} \otimes \mathcal{I} ( W(\phi_1)\otimes B_1)- T_{X_t,Y_t} \otimes \mathcal{I}(W(\phi_2)\otimes B_2)\|_1\\
&=& \|T_{X_t,Y_t}( W(\phi_1))\otimes B_1- T_{X_t,Y_t}(W(\phi_2))\otimes B_2\|_1 \\
& =&  \tr T_{X_t,Y_t}( W(\phi_1))\otimes B_1 + \tr T_{X_t,Y_t}( W(\phi_2))\otimes B_2\\
&= & \tr T_{X_t,Y_t}( W(\phi_1))\tr B_1 + \tr T_{X_t,Y_t}( W(\phi_2))\tr B_2 \\
&=& \det X_t \phi_1(\bm{0},\bm{0})\tr B_1 + \det X_t \phi_2(\bm{0},\bm{0})\tr B_2\\
&=& \det X_t (\phi_1(\bm{0},\bm{0})\tr B_1 + \phi_2(\bm{0},\bm{0})\tr B_2).
\end{eqnarray*}
Thus the derivative of $1$-norm takes the following form
\[ \frac{\dd}{\dd t}\|T_{X_t,Y_t} \otimes \mathcal{I} ( W(\phi_1)\otimes B_1-W(\phi_2)\otimes B_2)\|_1 
 = \left(\phi_1(\bm{0},\bm{0})\tr B_1 + \phi_2(\bm{0},\bm{0})\tr B_2\right) \frac{\dd}{\dd t}\det X_t.\]
 It has been seen that  $\phi_1(\bm{0},\bm{0})\tr B_1 + \phi_2(\bm{0},\bm{0})\tr B_2 \geq 0$. CP-divisibility of Gaussian channel implies that $\frac{\dd}{\dd t}\left(\det X_t \right)  \leq 0$ .

\par Any Gaussian channel has a unitary dilation. In fact it can be shown that such a dilation will be Gaussian unitary, which means, the unitary will have a representation in terms of symplectic matrices. Caruson et al \cite{holevo-mul} showed that an $n$-mode bosonic Gaussian channel will have a Gaussian unitary dilation in $2n$-mode system. We use the result to  construct a Gaussian channel in the covariance matrix level transformation which is given below. 
\begin{eg} \label{eg4.1}
\par Let $R_1, R_2\in Sp(4)$, where $Sp(4)$ is group of symplectic matrix of order $4$, as follows:
\[ R_1=\begin{bmatrix}
X_1& Y_1\\
-Y_1 & X_1\\
\end{bmatrix} \quad \text{and} \quad  
R_2=\begin{bmatrix}
X_2& Y_2\\
-Y_2 & X_2\\
\end{bmatrix},
\]
with
 \[ X_1= \frac{1}{\sqrt{2}}\begin{bmatrix}
1& 0\\
0 & 1\\
\end{bmatrix}, \;
 Y_1= \frac{1}{\sqrt{2}}\begin{bmatrix}
0& 1\\
-1 & 0\\
\end{bmatrix}, \;
 X_2= \frac{1}{2}\begin{bmatrix}
1 & 1\\
1 & 1\\
\end{bmatrix},\;
 Y_2= \frac{1}{2}\begin{bmatrix}
1 & -1\\
-1 & 1\\
\end{bmatrix}.
\]
Let $T$ be the diagonal matrix defined as  $\mathrm{diag}\left(1,t,1,\dfrac{1}{t}\right)$. Define 
 \[L=R_1TR_2=\frac{1}{2\sqrt{2}} \left[
\begin{array}{cc|cc}
   1+\frac{1}{t} & 1-\frac{1}{t}& 1+\frac{1}{t} & -1+\frac{1}{t}\\
t+1 & t-1 & -t-1 & t-1 \\ \hline
-t-1 & -t+1 & t+1 & -t+1 \\
1+\frac{1}{t} & 1-\frac{1}{t}& 1+\frac{1}{t} & -1+\frac{1}{t}\\
    \end{array} \right] \] 
After reordering position and momentum vector we get,
\[L^{(1)}=\frac{1}{2\sqrt{2}}\left[
\begin{array}{cc|cc}
   1+\frac{1}{t} & 1+\frac{1}{t}& 1-\frac{1}{t} & -1+\frac{1}{t}\\
-t-1 & t+1 & -t+1 & _t+1 \\ \hline
t+1 & -t-1 & t-1 & t-1 \\
1+\frac{1}{t} & 1+\frac{1}{t}& 1-\frac{1}{t} & -1+\frac{1}{t}\\
    \end{array} \right]=
    \left[
\begin{array}{c|c}
    L_{11}&L_{12}\\ \hline
   L_{21}&L_{22}\\
    \end{array}
\right]\]
Now, take the transformation
\[\Psi: \left[
\begin{array}{c|c}
    S&0\\ \hline
   0&\frac{1}{2} \mathbb{I}_2\\
    \end{array}
\right] \longrightarrow
L^{(1)}\left[
\begin{array}{c|c}
    S&0\\ \hline
   0&\frac{1}{2} \mathbb{I}_2\\
    \end{array}
\right]{L^{(1)}}^t
= \left[
\begin{array}{c|c}
   L_{11}SL_{11}^t+\frac{1}{2}L_{12}L_{12}^t&\hspace{5mm} * \hspace{5mm}\\ \hline
   *&\hspace{5mm}*\hspace{5mm}
    \end{array}
\right]
\] 
Taking partial trace and making the map from $1$-mode to $1$-mode is to restrict the map on first component, which gives the Gaussian channel
\[ \Phi: S \longrightarrow  L_{11}SL_{11}^t+\frac{1}{2}L_{12}L_{12}^t. \]
Notice that we are getting a time dependent $1$-parameter family of  Gaussian channels, i.e. a Gaussian dynamical map.   In this case $X_t=L_{11}$ and $Y_t=L_{12}L_{12}^t$ and the $t$-parametrised family of Gaussian channel is given by $\left\{\Phi_{X_t,Y_t} \right\}$. By construction of Caruso et al \cite{holevo-mul} this will also satisfy the inequality \eqref{gieq}.  

Let $W(\phi)$ be Weyl operator in such a way that $W(\phi)$ is positive i.e., with $\phi$ a positive definite kernel. Then $\Phi_{X_t,Y_t}(W(\phi))$ is positive.
\[ \|\Phi_{X_t,Y_t}(W(\phi))\|_1= \tr \Phi_{X_t,Y_t}(W(\phi))= \phi (\bm{0},\bm{0})\det X_t.  \]
Since $\phi$ is positive definite kernel, $\phi (\bm{0},\bm{0}) >0$.
\[ \| \Phi_{X_t,Y_t}(W(\phi)) \|_1 = \phi (\bm{0},\bm{0}) \det X_t=2(1+t)(1+\frac{1}{t}) \phi (\bm{0},\bm{0}),\]
and its derivative is 
\[ \frac{\dd}{\dd t} \| \Phi_{X_t,Y_t}(W(\phi)) \|_1=2 \frac{\left(1+t\right)}{t^2} (t-1)\phi(\bm{0},\bm{0})= 2 \frac{\left(t^2-1\right)}{t^2}>0, \] 
whenever $t^2>1$. Hence $\Phi$ is not P-divisible by \cref{th4.1}.
\end{eg}

\begin{eg}\label{eg4.2}
   Construct a Gaussian channel in the covariance matrix level transformation.
Define  $R_1, R_2\in Sp(6)$ as follows: 
\[ R_1=\begin{bmatrix}
X_1& Y_1\\
-Y_1 & X_1\\
\end{bmatrix} \quad \text{and} \quad  
R_2=\begin{bmatrix}
X_2& Y_2\\
-Y_2 & X_2\\
\end{bmatrix},
\]
where  
\begin{alignat*}{2}
X_1&= \frac{1}{\sqrt{3}}\begin{bmatrix}
1&1&1\\
1& -\frac{\sqrt{3}}{2} & -\frac{\sqrt{3}}{2}\\
1&-\frac{\sqrt{3}}{2}&-\frac{\sqrt{3}}{2}\\
\end{bmatrix}, \qquad
 &Y_1 &= \frac{1}{\sqrt{3}}\begin{bmatrix}
0& 0&0\\
0 & \frac{1}{2}&-\frac{1}{2}\\
0&-\frac{1}{2} &\frac{1}{2}\\
\end{bmatrix}, \\
 X_2 &= \begin{bmatrix}
\frac{1}{\sqrt{2}} & \frac{1}{\sqrt{2}}&0\\
\frac{1}{\sqrt{2}} & \frac{1}{\sqrt{2}}&0\\
0&0&1\\
\end{bmatrix},
& Y_2 &= \frac{1}{\sqrt{2}}\begin{bmatrix}
1 & -1&0\\
-1 & 1&0\\
0&0&0 \\
\end{bmatrix}.
\end{alignat*}
As earlier, define a diagonal matrix $T=\mathrm{diag} \left(1,t,t^2,1,\dfrac{1}{t},\dfrac{1}{t^2}\right)$.
Using the notations of the previous example we define 
\begin{eqnarray*}
L &=&R_1TR_2 \\
&=& \frac{1}{\sqrt{6}}\left[
\begin{array}{cccc|cc}
   1+t & 1+t&\sqrt{2}t^2&1-t&-1+t&0 \\
1-\frac{\sqrt{3}}{2}t+\frac{1}{2t}& 1-\frac{\sqrt{3}}{2}t-\frac{1}{2t}& -\frac{\sqrt{6}}{2}t^2&1+\frac{\sqrt{3}}{2}t+\frac{1}{2t}&-1-\frac{\sqrt{3}}{2}t-\frac{1}{2t}&-\frac{1}{\sqrt{2}t^2}\\
1-\frac{\sqrt{3}}{2}t-\frac{1}{2t}&1-\frac{\sqrt{3}}{2}t+\frac{1}{2t}&-\frac{\sqrt{6}}{2}t^2&1+\frac{\sqrt{3}}{2}t-\frac{1}{2t}&-1-\frac{\sqrt{3}}{2}t-\frac{1}{2t}&\frac{1}{\sqrt{2}t^2} \\ 
-1+\frac{1}{t} & 1-\frac{1}{t}& 0 & 1+\frac{1}{t} &1+\frac{1}{t}&\frac{\sqrt{2}}{t^2} \\\hline
-\frac{t}{2}-1-\frac{\sqrt{3}}{2t} & -\frac{t}{2}+1-\frac{\sqrt{3}}{2t}& \frac{t^2}{\sqrt{2}} & \frac{t}{2}+1-\frac{\sqrt{3}}{2t}&-\frac{t}{2}+1-\frac{\sqrt{3}}{2t}&-\frac{\sqrt{3}}{\sqrt{2}t^2}\\
\frac{t}{2}-1-\frac{\sqrt{3}}{2t}&\frac{t}{2}+1-\frac{\sqrt{3}}{2t}&-\frac{t^2}{\sqrt{2}}&-\frac{t}{2}+1-\frac{\sqrt{3}}{2t}&\frac{t}{2}+1-\frac{\sqrt{3}}{2t}&-\frac{\sqrt{3}}{\sqrt{2}t^2}\\
    \end{array} 
    \right].
    \end{eqnarray*}
After reordering position and momentum vector we get,
\begin{eqnarray*}
L^{(1)} &=& \frac{1}{\sqrt{6}}\left[
\begin{array}{cccc|cc}
   1+t & 1-t& 1+t & -1+t& \sqrt{2}t^2&0\\
-1+\frac{1}{t} & 1+\frac{1}{t} & 1-\frac{1}{t} & -1+\frac{1}{t}&0&\frac{\sqrt{2}}{t^2} \\ 
1-\frac{\sqrt{3}}{2}t+\frac{1}{2t} & 1+\frac{\sqrt{3}}{2}t+\frac{1}{2t} & 1-\frac{\sqrt{3}}{2}t-\frac{1}{2t} & -1-\frac{\sqrt{3}}{2}t+\frac{1}{2t}& -\frac{\sqrt{6}}{2}t^2& -\frac{1}{\sqrt{2}t^2} \\
-\frac{t}{2}-1-\frac{\sqrt{3}}{2t} & \frac{t}{2}+1-\frac{\sqrt{3}}{2t}& -\frac{t}{2}+1-\frac{\sqrt{3}}{2t} & -\frac{t}{2}+1-\frac{\sqrt{3}}{2t} & \frac{t^2}{\sqrt{2}}&-\frac{\sqrt{3}}{\sqrt{2}t^2}\\\hline
1-\frac{\sqrt{3}}{2}t-\frac{1}{2t}&1+\frac{\sqrt{3}}{2}t-\frac{1}{2t}&1-\frac{\sqrt{3}}{2}t+\frac{1}{2t}&-1-\frac{\sqrt{3}}{2}t-\frac{1}{2t}&-\frac{\sqrt{6}}{2}t^2&\frac{1}{\sqrt{2}t^2}\\
\frac{t}{2}-1-\frac{\sqrt{3}}{2t}&-\frac{t}{2}+1-\frac{\sqrt{3}}{2t}&\frac{t}{2}+1-\frac{\sqrt{3}}{2t}&\frac{t}{2}+1-\frac{\sqrt{3}}{2t}&-\frac{t^2}{\sqrt{2}}&-\frac{\sqrt{3}}{\sqrt{2}t^2}\\

    \end{array} \right] \\
    &=&
    \begin{bmatrix}
\begin{array}{c|c}
    L_{11}&L_{12}\\ \hline
   L_{21}&L_{22}\\
    \end{array}
\end{bmatrix}
\end{eqnarray*}
Now, take the Gaussian unitary transformation in the level of covariance matrices as follows: 
\[\Psi: \begin{bmatrix}
\begin{array}{c|c}
    S&0\\ \hline
   0&\frac{1}{2} \mathbb{I}_2
    \end{array}
\end{bmatrix} \longrightarrow
L^2\begin{bmatrix}
\begin{array}{c|c}
    S&0\\ \hline
   0&\frac{1}{2} \mathbb{I}_2
    \end{array}
\end{bmatrix}{L^2}^t
= \begin{bmatrix}
\begin{array}{c|c}
   L_{11}SL_{11}^t+\frac{1}{2}L_{12}L_{12}^t& \hspace{5pt}*\hspace{5pt}\\ \hline
  *  &\hspace{5pt}*\hspace{5pt}
    \end{array}
\end{bmatrix}.
\] 
As earlier, if we restrict the map on first component then we get 
\[ \Phi_{X_t,Y_t}: S \longrightarrow  L_{11}SL_{11}^t+\frac{1}{2}L_{12}L_{12}^t \]
is a Gaussian channel from $2$-mode to $2$-mode. In this case $X_t=L_{11}$ and $Y_t=L_{12}L_{12}^t$.
$W(\phi)$ be Weyl operator in such a way that $W(\phi)$ is positive i.e., with $\phi$ a positive definite kernel. Then $\Phi_{X_t,Y_t}(W(\phi))$ is positive.
\[ \|\Phi_{X_t,Y_t}(W(\phi))\|_1= \tr \Phi_{X_t,Y_t}(W(\phi))= \phi (\bm{0},\bm{0})\det X_t  \]
Since $\phi$ is positive definite kernel, $\phi (\bm{0},\bm{0}) >0$.
\begin{eqnarray*} 
\| \Phi_{X_t,Y_t}(W(\phi)) \|_1 &=& \phi (\bm{0},\bm{0}) \det X_t \\
&=& \left( 80+24\sqrt{2}+16\sqrt{3}+32\sqrt{6}+\frac{2\sqrt{3}}{t^2}-\frac{6(\sqrt{2}+\sqrt{3})}{t}\right)\phi (\bm{0},\bm{0}).
\end{eqnarray*}
We get the derivative
\[ \frac{\dd}{\dd t} \| \Phi_{X_t,Y_t}(W(\phi)) \|_1=\frac{6(\sqrt{2}+\sqrt{3})t-4\sqrt{3}}{t^3}\phi(\bm{0},\bm{0})>0, \quad \text{ whenever ~~} t> \frac{2}{\sqrt{3}(\sqrt{2}+\sqrt{3})} \approx 0.367. \] 
Hence, $\Phi$ is not P-divisible by \cref{th4.1}.

\end{eg}

\section{Conclusion}\label{s5}
In this paper we have shown that if a family of dynamical map on trace class operator $\mathcal{B}_1(\mathcal{H})$, where $\mathcal{H}$  is a separable Hilbert space is P-divisible (resp. CP-divisible), then it is sufficient to show that derivative of $\|\Lambda_t(X)\|_1$ (resp. $ \| \mathcal{I} \otimes \Lambda_t (Y) \|_1$)   is not positive  for any choice of Hermitian operator $X$ (resp. $Y$) in suitable Hilbert space provided the derivatives exist. Converse of the theorem holds whenever the family of dynamical maps are surjective. We have applied this theorem on various families  of dynamical maps, viz.  unitary evolution, idempotent maps, Gaussian channels and have checked their P-divisibility as well as  CP-divisibility. For one parameter family of combination of idempotent maps we construct such conditions for which the family is P or CP-divisible. 
 \par In a special case we obtained an obstruction to P-divisibility of an  $n$-mode Gaussian channel $\{\Phi_{X_t,Y_t}\}$ using the derivative of the determinant of $X_t$. We have given explicit examples of such channels.  
 \par Converse of our result  depends on surjectivity  of the dynamical maps. We conjecturise that this result holds for any family of dynamical maps (i.e. one parameter CPTP maps) as well when the underlying Hilbert space $\mathcal{H}$ is of infinite dimension. As expected, this holds when $\mathcal{H}$ is of finite dimensions \cite{Rivas2011, CRS-prl-18}.  In general for one parameter  CP maps on $\mathcal{B}_1(\mathcal{H})$, this may not be true. 
 
\section*{Acknowledgements} 
SP is supported by \href{https://www.csir.res.in/}{{\sf CSIR, Govt. of India}} research fellowship.
RS acknowledges financial support from \href{https://dst.gov.in}{DST}, Govt. of India, project number  {\sf DST/ICPS/QuST/Theme- 2/2019/General Project Q-90}.  SP and RS thank Prof. Rajendra Bhatia for useful comments and suggestions.
\bibliographystyle{acm}
\bibliography{biblio}
\end{document}